\documentclass[reqno]{amsart}
\usepackage[english]{babel}
\usepackage{amscd,amssymb,amsmath,amsfonts,latexsym,amsthm}
\usepackage{inputenc}
\usepackage{graphicx,color}
\usepackage{tikz-cd}
\newtheorem{thm}{Theorem}[section]

\newtheorem{lem}[thm]{Lemma}
\newtheorem{prop}[thm]{Proposition}
\newtheorem{defn}[thm]{Definition}
\newtheorem{exam}[thm]{Example}
\newtheorem{rem}[thm]{Remark}
\numberwithin{equation}{section}

\def\cal{\mathcal }

\begin{document}

\title[Irreducible cuspidal modules of simple $n$-Lie algebras.]{Irreducible cuspidal modules of simple $n$-Lie algebras}

\author{Bakhrom Omirov, Gulkhayo Solijanova}

\address{Bakhrom A. Omirov \newline \indent
Institute for Advanced Study in Mathematics,
Harbin Institute of Technology, Harbin 150001 \newline \indent
Suzhou Research Institute, Harbin Institute of Technology, Harbin  215104, Suzhou, China}
\email{{\tt omirovb@mail.ru}}

\address{Gulkhayo O. Solijanova \newline \indent
National University of Uzbekistan, 100174, Tashkent, Uzbekistan,
\newline \indent
V.I. Romanovskiy Institute of Mathematics, Uzbekistan Academy of Sciences, Tashkent, Uzbekistan}\email{{\tt gulhayo.solijonova@mail.ru}}

\begin{abstract} This work devoted to the description of irreducible cuspidal modules over simple $n$-Lie algebras. Since the description of irreducible modules over $n$-Lie algebra $O^n$ are already well understood, we focus here on the irreducible cuspidal modules over $n$-Lie algebras of Wronskians and Jacobians. First, for a given $n$-Lie algebra $\mathcal{L}$, we analyze the possible Lie and Leibniz structures on $\wedge^{n-1} \mathcal{L}$ and $\otimes^{n-1} \mathcal{L}$ by thoroughly examining existing structures. Next, we classify the irreducible cuspidal modules over the $n$-Lie algebra of Wronskians defined on Laurent polynomials with degree-preserving derivations. Furthermore, we prove that these modules remain irreducible over the $n$-Lie algebra of Jacobians.
\end{abstract}

\subjclass[2020]{17A30, 17A32, 17B10, 17B60, 17B65}

\keywords{Lie algebra, Leibniz algebra, $n$-Lie algebra, Leibniz $n$-algebra, irreducible module, cuspidal module.}

\maketitle

\section{Introduction}

\

The simplest phase space for Hamiltonian mechanics is $\mathbb{R}^2$ with coordinates $x$ and $y$, where the canonical Poisson bracket is given by
\[
\{f_1, f_2\} = \frac{\partial(f_1, f_2)}{\partial (x, y)},
\]
which satisfies the Jacobi identity and governs the Hamiltonian equations of motion. In \cite{Nambu1}, a generalization of this example was considered by defining, on $\mathbb{R}^3$ with coordinates $x, y, z$, the canonical bracket given as follows:
\begin{equation}\label{eqPoisson}
\{f_1, f_2, f_3\} = \frac{\partial(f_1, f_2, f_3)}{\partial(x, y, z)},
\end{equation}
where $\frac{\partial(f_1, f_2, f_3)}{\partial(x, y, z)}$ is the Jacobian of the map $f = (f_1, f_2, f_3) : \mathbb{R}^3 \to \mathbb{R}^3$. The formula \eqref{eqPoisson} naturally generalizes the Poisson bracket from a binary to a ternary operation.

The concept of $n$-Lie algebras was first introduced in the work of Filippov \cite{Filippov0}. He proposed this generalization of the ternary bracket to an $n$-bracket that satisfies a specific type of identity (see identity \eqref{identity1}), inspired by the general notion of $\Omega$-algebras introduced by Kurosh in \cite{Kurosh}.

In addition to simple finite-dimensional $n$-Lie algebras, conceptual examples of $n$-Lie algebras using determinants of Jacobians in the polynomial algebra in $n$ variables are provided in \cite{Filippov0, Filippov}. Later, the $n$ bracket, called the Nambu bracket of order $n$ - was independently studied in \cite{Takhtajan1} as a consistency condition for Nambu's dynamics. In this context, $n$-Lie algebras were referred to as ``Nambu-Lie algebras.''

The origin of Nambu-Lie algebras is related to Nambu mechanics, a generalization of Hamiltonian mechanics proposed by Y. Nambu in \cite{Nambu1}.

Recall that $n$-Lie algebras were introduced as a generalization of Lie algebras by replacing the traditional skew-symmetric binary operation with an $n$-ary operation that is skew-symmetric with respect to all its arguments. This $n$-ary operation, in the case of $n=2$, coincides with the Jacobi identity.

By appropriately defining the operators $ad$, the fundamental identity characterizing the $n$-Lie algebras can be interpreted as a type of derivation property for these algebras.

Another significant class of $3$-algebras, known as the {\it Lie triple systems}, was introduced in \cite{Lister}. These are vector spaces equipped with a bracket $[-, -, -]$ that satisfies the identity (see identity \eqref{identity1}) for $n=3$, and instead of skew-symmetry, the bracket satisfies the following conditions:
\[
[x, y, z] + [y, z, x] + [z, x, y] = 0 \quad \text{and} \quad [x, y, y] = 0.
\]

Lie triple systems later appeared independently in 1985 under the name ``skew-symmetric triple systems'' in the work \cite{Faulkner}, as part of a classification of identities in triple systems. Thus, $3$-Lie algebras were also referred to as skew-symmetric Lie triple systems.

It is worth mentioning the works \cite{Casas}, \cite{Takhtajan}, and \cite{Gautheron}, which further explored generalizations of Lie structures by extending the binary bracket to an $n$-bracket.

For a given $n$-Lie algebra (or Leibniz $n$-algebra) $\mathcal{L}$, various works have explored Lie and Leibniz structures on $\wedge^{n-1}\mathcal{L}$ and $\otimes^{n-1}\mathcal{L}$ (see \cite{erratum, Balibanu0, Balibanu, Casas, Takhtajan, Dzhum, Ladra, Takhtajan1}). However, these studies do not provide a unified approach to describe such structures. Therefore, we present the possible Lie and Leibniz structures on $\wedge^{n-1}\mathcal{L}$ and $\otimes^{n-1}\mathcal{L}$ by examining existing structures and addressing certain inaccuracies.

The study of irreducible representations of simple $n$-Lie algebras relies on a bijective correspondence between representations of simple $n$-Lie algebras and representations of Lie algebras of their inner derivations, subject to an additional property (see \cite{erratum, Balibanu, Boyallian, Boyallian1, Dzhum}). In fact, for an arbitrary $n$-Lie algebra $\mathcal{L}$, the space $\wedge^{n-1}\mathcal{L}$ can be equipped with a binary operation that makes it an algebra. Furthermore, the map
\[ \mathrm{ad} : \wedge^{n-1}\mathcal{L} \to \mathcal{L} \]
is such that $\ker(\mathrm{ad})$ is an ideal of the algebra $\wedge^{n-1}\mathcal{L}$. The Lie algebra of inner derivations of the $n$-Lie algebra $\mathcal{L}$ is then isomorphic to the quotient Lie algebra $\wedge^{n-1}\mathcal{L}/\ker(\mathrm{ad})$. This establishes the key relationship between the representations (modules) of the $n$-Lie algebra $\mathcal{L}$ and the Lie algebra $\wedge^{n-1}\mathcal{L}/\ker(\mathrm{ad})$.

W. Ling demonstrated in \cite{Ling} that there exists, up to isomorphism, only one simple finite-dimensional $n$ -Lie algebra for $n \geq 3$, denoted by $O^n$. The classification of irreducible finite-dimensional $O^n$-modules was obtained in \cite{Dzhum}, while all irreducible $O^n$-modules, including infinite-dimensional ones, were classified in \cite{Balibanu0}, \cite{Balibanu}.

Simple infinite-dimensional $n$-Lie algebras are known to exist only as the algebras of Wronskians and Jacobians, denoted by $W^n$ and $S^n$, respectively. Irreducible modules over the simple $n$-Lie algebras $W^n$ and $S^n$, defined on the ring of formal power series in $n$ variables, were described in \cite{Boyallian} and \cite{Boyallian1}, respectively.

It is known that the Lie algebras of inner derivations of $W^n$ and $S^n$ defined on Laurent polynomials with degree-preserving derivations are the Lie algebra of vector fields on a torus and its subalgebra of divergence-free vector fields, denoted as $W_n$ and $S_n$, respectively \cite{Kac}. Using the descriptions of irreducible cuspidal modules for $W_n$ and $S_n$ provided in \cite{Slava} and \cite{Talboom}, we identify those modules that can be extended to the simple $n$-Lie algebras of Wronskians and Jacobians defined on Laurent polynomials with
degree-preserving derivations.


We also note that the papers \cite{Billig}, \cite{Li}, and \cite{Xue} discuss the descriptions of irreducible Harish-Chandra modules, including cuspidal modules, for Lie algebras associated with $W_n$ and $S_n$.

The paper is organized as follows: In Section 3, we review the existing Lie and Leibniz structures on $\wedge^{n-1}\mathcal{L}$ and $\otimes^{n-1}\mathcal{L}$ and propose some additional structures. These structures are summarized in diagrams presented in Subsections 3.1 and 3.2.
Section 4 is devoted to establishing relations between modules over $n$-Lie algebras and modules over Lie algebras of their inner derivations. This investigation is motivated by inaccuracies in the works of \cite{Boyallian}, \cite{Boyallian1} concerning the relationship between $W^n$-modules and $S^n$-modules and $\wedge^{n-1} V$-modules. However, these inaccuracies do not affect the main results of those papers, as they stem from the fact that $\wedge^{n-1} V$ is a non-Lie Leibniz algebra and results regarding the ideal $\ker(\mathrm{ad})$ (see Remarks \ref{exam04} and \ref{counterexample}). Specifically, we establish the relation between modules over an $n$-Lie algebra $\mathcal{L}$ and the quotient Leibniz algebra $\wedge^{n-1}\mathcal{L} / \ker(\mathrm{ad})$, which is, in fact, a Lie algebra. A key role in this relationship is played by so-called the Balanced Equality (see Equality \eqref{eq3.7}). In particular, $M$ is an irreducible module of the $n$-Lie algebra $\mathcal{L}$ if and only if it is a $\wedge^{n-1}\mathcal{L} / \ker(\mathrm{ad})$-module that satisfies the Balanced Equality.
Section 5 provides descriptions of irreducible cuspidal $W_{n-1}$- and $S_n$-modules as obtained in \cite{Slava} and \cite{Talboom} (see Theorems \ref{mainthm} and \ref{Talboom}). To apply a unified approach in our study, these descriptions are reformulated (see Theorem \ref{Talboom1}). Moreover, to utilize the properties of derivations on determinants in the actions of $W_{n-1}$ and $S_n$ on tensor modules, we formally extend the derivations
\[
d_i = t_i \frac{\partial}{\partial t_i} \ : \ t_i^n \mapsto n t_i^n,
\]
originally defined for $n \in \mathbb{N}$, to include $n \in \mathbb{C}$.
Section 6 focuses on reducing the Balanced Equality for $W_{n-1}$- and $S_n$-modules into more convenient verification equalities (see Equalities \eqref{eq5.6}-\eqref{eq5.7} and \eqref{eq7.88}-\eqref{eq7.99}).
The final sections, 7 and 8, are devoted to identifying the irreducible cuspidal $W_{n-1}$- and $S_n$-modules from the descriptions that satisfy the simplified equalities, ultimately leading to the characterization of irreducible cuspidal $W^n$- and $S^n$-modules (see Theorems \ref{thm6.7} and \ref{thm8.3}).

\section{Preliminaries}

\

In this section we provide definitions and preliminary results that will be used throughout of the paper.

\begin{defn} \cite{Loday} A vector space $\cal L$ over a field $\mathbb{F}$ equipped with a bilinear bracket operation $[ \cdot , \cdot ]: \cal L \otimes \cal L \to \cal L$ is called Leibniz algebra if for any $x, y, z \in \cal L$ the following (Leibniz) identity
$$[x, [y, z]] = [[x, y], z] - [[x,z],y]$$
holds true.
\end{defn}

Note that a Leibniz algebra satisfying the identity $[x,y]=-[y,x]$ is a Lie algebra (this identity enables the transformation of the Leibniz identity into the Jacobi identity).

\begin{defn} \cite{Filippov0} A vector space $\cal L$ over a field $\mathbb{F}$ equipped with skew-symmetric $n$-ary bracket $[-, \dots, -]$ is called a $n$-Lie algebra if the following fundamental identity holds for any $x_i, y_i\in L$:

\begin{equation}\label{identity1}
[x_1,\dots, x_{n-1},[y_1,\dots, y_n]]=\sum_{i=1}^{n}[y_1, \dots, [x_1,\dots, x_{n-1}, y_i], \dots, y_n].
\end{equation}
\end{defn}

We present some examples of $n$-Lie algebras.

\begin{exam}\label{exam1.2}

(i) Let \( V \) be an \( (n+1) \)-dimensional vector space over a field \( \mathbb{F} \), equipped with a non-zero \( n \)-ary bracket given by
\[
O^n: \quad [e_1, \dots, e_{i-1}, e_{i+1}, \dots, e_{n+1}] = (-1)^{n+1+i} e_i, \quad 1 \leq i \leq n+1,
\]
where \( \{e_1, \dots, e_{n+1}\} \) is a basis of \( V \). Then \( (V, [-, \dots, -]) \) forms an \( n \)-Lie algebra.

(ii) Consider the algebra \( \mathcal{A}_n = \mathbb{C}[t_1^{\pm 1}, \dots, t_n^{\pm 1}] \) of Laurent polynomials, and let the derivations \( d_i \) be defined as
\[
d_i = t_i \frac{\partial}{\partial t_i}, \quad 1 \leq i \leq n.
\]
Define the \( n \)-ary bracket \( \{ -, \dots, - \} \) as
\begin{equation}
\{f_1, \dots, f_n\} = \mathrm{Jac}(f_1, \dots, f_n) =
\begin{vmatrix}
 d_1(f_1) & \dots & d_1(f_n) \\
 \vdots   & \ddots & \vdots \\
 d_n(f_1) & \dots & d_n(f_n)
\end{vmatrix},
\end{equation}
where \( f_1, \dots, f_n \in \mathcal{A}_n \). Then \( S^n = (\mathcal{A}_n, \{ -, \dots, - \}) \) is an \( n \)-Lie algebra.

(iii) Let \( W^n = (\mathcal{A}_n, [-, \dots, -]) \), where the \( n \)-ary bracket is defined as the Wronskian determinant:
\begin{equation}\label{wronskian}
[f_1, \dots, f_n] = \mathrm{Wr}(f_1, \dots, f_n) =
\begin{vmatrix}
 f_1          & \dots & f_n \\
 d_1(f_1)     & \dots & d_1(f_n) \\
 \vdots       & \ddots & \vdots \\
 d_{n-1}(f_1) & \dots & d_{n-1}(f_n)
\end{vmatrix},
\end{equation}
where \( f_1, \dots, f_n \in \mathcal{A}_n \) and \( d_i = t_i \frac{\partial}{\partial t_i} \). Then \( W^n \) is an \( n \)-Lie algebra.
\end{exam}
Note that these \( n \)-Lie algebras are simple \cite{Kac, Dzhum0, Filippov}. Furthermore, in any finite dimension, there is no simple \( n \)-Lie algebra except \( O^n \) (see \cite{Ling}).

It is remarkable that the \( n \)-brackets \( \{ -, \dots, - \} \) and \( [ -, \dots, - ] \) satisfy the properties of \( n \)-Poisson and \( n \)-contact brackets:
\begin{equation}\label{Wronskianproperty}
\begin{array}{lll}
\{fg,f_2, \dots, f_n\}&=&f\{g, f_2, \dots, f_n\}+g\{f, f_2, \dots, f_n\},\\[3mm]
[fg,f_2, \dots, f_n]&=&f[g, f_2, \dots, f_n]+g[f, f_2, \dots, f_n]-fg[1, f_2, \dots, f_n],\\[3mm]
\end{array}
\end{equation}
 respectively (see in \cite{Zelmanov}).

Similarly, just as the concept of Leibniz algebras generalizes Lie algebras \cite{Casas}, a generalization of \( n \)-Lie algebras was introduced. Namely, a \textit{Leibniz \( n \)-algebra} is a vector space over a field \( \mathbb{F} \), equipped with an \( n \)-ary bracket \( [-, \dots, -] \) (not necessarily skew-symmetric) that satisfies the identity \eqref{identity1}. For examples and results on Leibniz \( n \)-algebras, we refer the reader to \cite{Albeverio, Casas0, Casas1, Casas3, omirov, Gago} and the references therein.

\section{Lie and Leibniz structures on $\wedge^{n-1}\cal L$ and $\otimes^{n-1}\cal L$.}

\

In this section, we present a comprehensive survey on Leibniz and Lie structures that can be formulated on the spaces \( \otimes^{n-1} \mathcal{L} \) and \( \wedge^{n-1} \mathcal{L} \). Our goal is to integrate and consolidate results from various sources, including \cite{erratum}, \cite{Balibanu}, \cite{Ladra}, \cite{Takhtajan}, \cite{Boyallian}, and \cite{Boyallian1}, into a unified framework.

\

\subsection{Structures related to Leibniz $n$-algebras}

\

\

Let \( \mathcal{L}= (\mathcal{L}, [ -, \dots, - ]) \) be a Leibniz \( n \)-algebra. For arbitrary elements \( x = x_1 \otimes \dots \otimes x_{n-1} \) and \( y = y_1 \otimes \dots \otimes y_{n-1} \) in the space \( \otimes^{n-1} \mathcal{L} \), we define the binary bracket
\[
[x,y]_{\otimes} = \sum_{i=1}^{n-1} y_1 \otimes \dots \otimes [x_1, \dots, x_{n-1}, y_i] \otimes \dots \otimes y_{n-1}.
\]
Then, \( \otimes^{n-1} \mathcal{L}\) with respect to the bracket \( [ -, - ]_{\otimes} \) forms a Leibniz algebra \cite{Casas}. We call \( BLLb^{\otimes}_{n-1} := (\otimes^{n-1} \mathcal{L}, [ -, - ]_{\otimes}) \) a \textit{basic Leibniz algebra associated with a Leibniz \( n \)-algebra \( \mathcal{L} \)}.

For a given \( n \)-Lie algebra \( \cal L \) and \( x \in \otimes^{n-1} \mathcal{L} \), we define an operator \( \text{ad}_{\otimes} : \otimes^{n-1} \cal L \rightarrow \cal L \) by
\[
{\text{ad}_x}_\otimes (y) = [x_1, \dots, x_{n-1}, y], \quad y \in \cal L.
\]
From \eqref{identity1}, we derive that the kernel of the adjoint action
\[
\text{Ker}(\text{ad}_{\otimes}) = \text{Span} \{ x \in \otimes^{n-1} \mathcal{L} \mid {\text{ad}_x}_\otimes = 0 \}
\]
is an ideal of the Leibniz algebra \( BLLb^{\otimes}_{n-1} \).

Similarly to \cite{Ladra}, one can prove the following result.

\begin{prop} \label{prop2.2}
Let \( (\mathcal{L}, [ -, \dots, - ]) \) be a Leibniz \( n \)-algebra. Then the equality
\[[{\text{ad}_x}_\otimes, {\text{ad}_y}_\otimes] = \text{ad}_{[x,y]_{\otimes}} \quad \mbox{for any} \quad x, y \in \otimes^{n-1} \cal L \]
 holds.
\end{prop}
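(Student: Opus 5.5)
The plan is a direct computation on a test element $z\in\mathcal L$, using only the fundamental identity \eqref{identity1} (which is exactly what a Leibniz $n$-algebra satisfies, with no skew-symmetry needed) and linearity. First we reduce to decomposable tensors. The map $x\mapsto {\text{ad}_x}_\otimes$ is defined on pure tensors $x=x_1\otimes\dots\otimes x_{n-1}$ and extended linearly, and $[-,-]_\otimes$ is bilinear. Hence both sides of the claimed equality are bilinear in $(x,y)$, and it suffices to take $x=x_1\otimes\dots\otimes x_{n-1}$ and $y=y_1\otimes\dots\otimes y_{n-1}$. We interpret $[{\text{ad}_x}_\otimes,{\text{ad}_y}_\otimes]$ as the operator commutator ${\text{ad}_x}_\otimes\circ{\text{ad}_y}_\otimes-{\text{ad}_y}_\otimes\circ{\text{ad}_x}_\otimes$ on $\mathcal L$.

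Fix $z\in\mathcal L$ and apply \eqref{identity1} with $(y_1,\dots,y_{n-1},z)$ in the role of the inner $n$ arguments:
\[
[x_1,\dots,x_{n-1},[y_1,\dots,y_{n-1},z]]=\sum_{i=1}^{n-1}[y_1,\dots,[x_1,\dots,x_{n-1},y_i],\dots,y_{n-1},z]+[y_1,\dots,y_{n-1},[x_1,\dots,x_{n-1},z]].
\]
The left-hand side is ${\text{ad}_x}_\otimes{\text{ad}_y}_\otimes(z)$, and the last term on the right is ${\text{ad}_y}_\otimes{\text{ad}_x}_\otimes(z)$. Moving the latter to the left gives
\[
[{\text{ad}_x}_\otimes,{\text{ad}_y}_\otimes](z)=\sum_{i=1}^{n-1}[y_1,\dots,[x_1,\dots,x_{n-1},y_i],\dots,y_{n-1},z].
\]

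Finally, by the definition of $[x,y]_\otimes$ as $\sum_i y_1\otimes\dots\otimes[x_1,\dots,x_{n-1},y_i]\otimes\dots\otimes y_{n-1}$ and the linear extension of $\text{ad}_\otimes$, the right-hand side equals $\text{ad}_{[x,y]_\otimes}(z)$. Since $z$ is arbitrary, the operator identity holds for pure tensors, and then for all $x,y$ by bilinearity.

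There is no genuine obstacle here; this is essentially the argument of \cite{Ladra}. The only point needing care is bookkeeping. The $n$-th slot of $[y_1,\dots,y_{n-1},z]$ is where the term ${\text{ad}_y}_\otimes{\text{ad}_x}_\otimes(z)$ arises, and the other $n-1$ slots reproduce exactly the summands of $[x,y]_\otimes$. The convention for the order in the commutator must also match the order in $[x,y]_\otimes$.
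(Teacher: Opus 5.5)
Your proof is correct and matches the intended argument. The paper gives no separate proof; it only says the result follows "similarly to \cite{Ladra}", and that argument is exactly yours: apply \eqref{identity1} with $y_n=z$, identify the last summand as ${\text{ad}_y}_\otimes{\text{ad}_x}_\otimes(z)$, recognize the remaining $n-1$ summands as $\text{ad}_{[x,y]_\otimes}(z)$, and reduce to pure tensors by multilinearity.
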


One can check that \( \mathcal{I}_{\otimes} = \text{Span} \{ [x, y]_{\otimes} + [y, x]_{\otimes} \mid x, y \in \otimes^{n-1} \mathcal{L} \} \)
is an ideal of the Leibniz algebra \( BLLie^{\otimes}_{n-1} \). A Lie algebra \( BLLb^{\otimes}_{n-1} / \mathcal{I}_{\otimes} \) is said to be \textit{a Lie algebra associated with a basic Leibniz algebra \( BLLb^{\otimes}_{n-1} \)}.

\begin{lem} \label{idealI}
We have the embedding: \( \mathcal{I}_{\otimes} \subseteq \text{Ker}(\text{ad}_{\otimes}) \).
\end{lem}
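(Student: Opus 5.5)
The plan is to derive the inclusion directly from Proposition \ref{prop2.2}, which says that $\text{ad}_\otimes$ turns the bracket $[-,-]_\otimes$ into the commutator of operators. First I make the setting precise. On pure tensors $x = x_1\otimes\dots\otimes x_{n-1}$ the operator ${\text{ad}_x}_\otimes$ is $y\mapsto [x_1,\dots,x_{n-1},y]$. This expression is multilinear in $x_1,\dots,x_{n-1}$, so it extends uniquely to a linear map $\text{ad}_\otimes:\otimes^{n-1}\mathcal{L}\to \mathrm{End}(\mathcal{L})$. Consequently $\text{Ker}(\text{ad}_\otimes)$ is a linear subspace of $\otimes^{n-1}\mathcal{L}$. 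Since $\mathcal{I}_\otimes$ is by definition the span of the elements $[x,y]_\otimes+[y,x]_\otimes$, it suffices to show that each such generator lies in $\text{Ker}(\text{ad}_\otimes)$.

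Next I check that Proposition \ref{prop2.2} holds for arbitrary (not just pure) tensors $x,y$. Both sides of $[{\text{ad}_x}_\otimes,{\text{ad}_y}_\otimes]=\text{ad}_{[x,y]_\otimes}$ are bilinear in $(x,y)$: the left side because $\text{ad}_\otimes$ is linear and the commutator is bilinear, and the right side because $[-,-]_\otimes$ is bilinear and $\text{ad}_\otimes$ is linear. Hence the identity extends from pure tensors to all of $\otimes^{n-1}\mathcal{L}$.

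Then, for any $x,y\in\otimes^{n-1}\mathcal{L}$, linearity of $\text{ad}_\otimes$ and Proposition \ref{prop2.2} applied to both orders give
\[
\text{ad}_{[x,y]_\otimes+[y,x]_\otimes} = \text{ad}_{[x,y]_\otimes}+\text{ad}_{[y,x]_\otimes} = [{\text{ad}_x}_\otimes,{\text{ad}_y}_\otimes]+[{\text{ad}_y}_\otimes,{\text{ad}_x}_\otimes]=0,
\]
since the commutator of operators is skew-symmetric. Thus every generator of $\mathcal{I}_\otimes$ lies in the kernel, and so $\mathcal{I}_\otimes\subseteq \text{Ker}(\text{ad}_\otimes)$.

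There is no serious obstacle. The only point needing care is the one handled in the second step: Proposition \ref{prop2.2} and the operator $\text{ad}_\otimes$ are stated on pure tensors, so one must use their linear extensions before applying them to the generators of $\mathcal{I}_\otimes$.
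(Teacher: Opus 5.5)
Your proof is correct and is essentially the paper's argument. The paper applies the fundamental identity \eqref{identity1} directly to ${\text{ad}_v}_\otimes(z)$ with $v=[x,y]_\otimes+[y,x]_\otimes$, and that is exactly the computation behind Proposition~\ref{prop2.2}: taking $y_n=z$ in \eqref{identity1} gives $\text{ad}_{[x,y]_\otimes}=[{\text{ad}_x}_\otimes,{\text{ad}_y}_\otimes]$. So your route through that proposition, followed by skew-symmetry of the operator commutator, is the same argument in operator form, with the linear extension from pure tensors made explicit.
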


\begin{proof}
Let \( x, y \in \otimes^{n-1} \mathcal{L} \) and \( z \in \mathcal{L} \), and let \( v = [x, y]_{\otimes} + [y, x]_{\otimes} \). Then, by straightforward application of \eqref{identity1}, we obtain \( {\text{ad}_v}_\otimes (z) = 0 \).
\end{proof}

By virtue of Lemma \ref{idealI}, we derive that the quotient algebra \( BLLb^{\otimes}_{n-1} / \text{Ker}(\text{ad}_{\otimes}) \) is a subalgebra of the Lie algebra \( BLLb^{\otimes}_{n-1} / \mathcal{I}_{\otimes} \).

Consider the subspace
\[
{\cal W_{n-1}}_{\otimes} = \text{Span} \{ x_1 \otimes \cdots \otimes x_{n-1} \mid x_i = x_j \ \text{for some} \ 1 \leq i < j \leq n-1 \}
\]
of the space \( \otimes^{n-1} \mathcal{L} \).

Unlike the \( n \)-Lie algebra case, we have \( {\cal W_{n-1}}_{\otimes} \nsubseteq \text{Ker}(\text{ad})_{\otimes} \), and in general, \( {\cal W_{n-1}}_{\otimes} \) is not an ideal of the basic Leibniz algebra \( BLLb^{\otimes}_{n-1} \). This can be verified, for example, in the 2-dimensional Leibniz 3-algebra with the multiplication table:
\[
[e_2, e_2, e_1] = -e_1, \quad [e_2, e_1, e_2] = e_1, \quad [e_2, e_2, e_2] = e_1.
\]

\begin{rem} \label{rem2.3}
Note that, in general, we have \( {\cal W_{n-1}}_{\otimes} \nsubseteq \mathcal{I}_{\otimes} \) and \( \mathcal{I}_{\otimes} \nsubseteq {\cal W_{n-1}}_{\otimes} \). Indeed, consider a Leibniz \( n \)-algebra \( \mathcal{L} \) such that \( \mathcal{L} \neq \mathcal{L}^2 \) (for instance, if \( \mathcal{L} \) is nilpotent, solvable, or admits the form \( \mathcal{L}' \oplus \mathbb{F}^k \)). Let \( e \in \mathcal{L} \setminus \mathcal{L}^2 \). Then it is easy to see that \( e \otimes \cdots \otimes e \in {\cal W_{n-1}}_{\otimes} \setminus \mathcal{I}_{\otimes} \), and \( [x, y]_{\otimes} + [y, x]_{\otimes} \neq e \otimes \cdots \otimes e \) for any \( x, y \in \otimes^{n-1} \mathcal{L} \).
\end{rem}

In the following diagram, we summarize our observations regarding structures related to Leibniz \( n \)-algebras

\begin{center}
\begin{tikzcd}
&

  \mbox{Leibniz n-alg.} \ \cal L
  \arrow{r}
  & \begin{array}{ccc}
    \mbox{Leibniz alg.}\\[1mm]
  \shortparallel\\[1mm]
  BLLb^{\otimes}_{n-1}
  \end{array}
  \arrow{r}
  &
  \begin{array}{ccc}
    \mbox{Lie alg.}\\[1mm]
  \shortparallel\\[1mm]
  BLLb^{\otimes}_{n-1}/Ker(ad_{\otimes})
  \end{array}
  \arrow[hookrightarrow]{r}
  &
\begin{array}{ccc}
    \mbox{Lie alg.}\\[1mm]
  \shortparallel\\[1mm]
  BLLb^{\otimes}_{n-1}/\cal I_{\otimes}
  \end{array}
\end{tikzcd}
\end{center}

\

\subsection{Structures related to $n$-Lie algebras}

\

\

Consider an $n$-Lie algebra $\cal L=(\cal{L}, [ - , \dots, - ])$. Then a vector space $\otimes^{n-1}\cal L$ with respect to the bracket
$[x,y]_{\otimes}$ forms a Leibniz algebra \cite{Takhtajan}.  We call the algebra $BLLie^{\otimes}_{n-1}:=(\otimes^{n-1}\cal{L}, [ - , - ]_{\otimes})$ {\it a basic Leibniz algebra associated with a $n$-Lie algebra $\cal L.$}

In \cite{Takhtajan} it was proved that $Ker(ad)_{\otimes}$ is an ideal of $BLLie^{\otimes}_{n-1}$ and $BLLie^{\otimes}_{n-1}/ Ker(ad)_{\otimes}$ is a Lie algebra, which is called {\it a basic Lie algebra associated with a $n$-Lie algebra $\cal L_{Lie}$}.

One can consider another quotient Lie algebra $BLLie^{\otimes}_{n-1}/ \cal I_{\otimes}$ as well. This algebra is said to be {\it a Lie algebra associated with a basic Leibniz algebra $BLLie^{\otimes}_{n-1}$.}

Similar to Lemma \ref{idealI} one can check that $\cal I_{\otimes}\subseteq  Ker(ad_{\otimes})$ and hence, $BLLie^{\otimes}_{n-1}/ Ker(ad_{\otimes})$ is a subalgebra of a Lie algebra $BLLie^{\otimes}_{n-1}/ \cal I_{\otimes}$.

\begin{prop} \cite{Ladra} Let $\cal L_{Lie}$ be a $n$-Lie algebra. Then for any $x,y\in \otimes^{n-1}\cal L$ the equality

$$[{ad_x}_{\otimes},{ad_y}_{\otimes}]=ad_{[x,y]_{\otimes}}=ad_{x\circ y} \quad \mbox{with} \quad x\circ y=\frac{1}{2}([x,y]_{\otimes}-[y,x]_{\otimes})$$
holds.
\end{prop}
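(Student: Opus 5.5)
We prove the two equalities separately, both by evaluating on an arbitrary $z\in\cal L$. Throughout, $x=x_1\otimes\cdots\otimes x_{n-1}$ and $y=y_1\otimes\cdots\otimes y_{n-1}$; by bilinearity it suffices to treat such decomposable tensors.

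\emph{First equality.} An $n$-Lie algebra is in particular a Leibniz $n$-algebra, so $[{ad_x}_{\otimes},{ad_y}_{\otimes}]=ad_{[x,y]_{\otimes}}$ follows from Proposition \ref{prop2.2}. To keep things self-contained, here is the direct check. Apply the fundamental identity \eqref{identity1} with outer arguments $x_1,\dots,x_{n-1}$ to the bracket $[y_1,\dots,y_{n-1},z]$:
$$
{ad_x}_\otimes {ad_y}_\otimes(z)=\sum_{i=1}^{n-1}[y_1,\dots,[x_1,\dots,x_{n-1},y_i],\dots,y_{n-1},z]+[y_1,\dots,y_{n-1},[x_1,\dots,x_{n-1},z]].
$$
The sum equals ${ad_{[x,y]_\otimes}}_\otimes(z)$, and the last term equals ${ad_y}_\otimes{ad_x}_\otimes(z)$. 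Subtracting the latter gives the claim.

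\emph{Second equality.} It is enough to show that ${ad_v}_\otimes=0$ for $v=[x,y]_\otimes+[y,x]_\otimes$. Indeed, once this holds,
$$
ad_{x\circ y}=\tfrac12\bigl(ad_{[x,y]_\otimes}-ad_{[y,x]_\otimes}\bigr)=\tfrac12\bigl(2\,ad_{[x,y]_\otimes}-ad_v\bigr)=ad_{[x,y]_\otimes},
$$
where the first step uses linearity of $ad$ in the tensor argument. That ${ad_v}_\otimes=0$ is the statement $\cal I_\otimes\subseteq Ker(ad_\otimes)$, noted above in analogy with Lemma \ref{idealI}. It can also be read off from the first equality applied twice:
$$
ad_{[x,y]_\otimes}+ad_{[y,x]_\otimes}=[{ad_x}_\otimes,{ad_y}_\otimes]+[{ad_y}_\otimes,{ad_x}_\otimes]=0,
$$
since the commutator of endomorphisms is antisymmetric. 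This identity holds for every $z$, so no skew-symmetry of the $n$-bracket is needed at this step.

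\emph{Main obstacle.} The only delicate point is the bookkeeping in \eqref{identity1}: the argument $z$ must sit in the last slot, so that the term with the inner bracket on $z$ is exactly ${ad_y}_\otimes{ad_x}_\otimes(z)$, while the remaining $n-1$ terms assemble into $ad$ of $[x,y]_\otimes$. Once this is written out, both equalities follow formally.
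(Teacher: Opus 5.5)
Your proof is correct and follows essentially the route the paper has in mind. The paper gives no separate proof here and cites \cite{Ladra}. Its surrounding results, however, are exactly your two steps: the first equality is Proposition \ref{prop2.2}, i.e.\ the fundamental identity \eqref{identity1} with $y_n=z$; the second is the inclusion $\mathcal{I}_{\otimes}\subseteq \mathrm{Ker}(\mathrm{ad}_{\otimes})$ of Lemma \ref{idealI}, which you correctly derive from the antisymmetry of the commutator (and, as you note, skew-symmetry of the $n$-bracket is never used, so the argument holds verbatim for Leibniz $n$-algebras).
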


\begin{defn}\label{skew}
We say that the product $[- ,  - ]_{\otimes}$ is skew-symmetric if $[x,y]_{\otimes}=-[y,x]_{\otimes}$ for any $x,y\in \otimes^{n-1}\cal L$.
\end{defn}

Note that $[- ,  - ]_{\otimes}$ is skew-symmetric in the case of $n$-Lie algebra $O^n$ (see \cite{erratum}), which implies that for $O^n$ the algebra $BLLie^{\otimes}_{n-1}$ is Lie algebra. However, this is not true for $n$-Lie algebras $S^n$ and $W^n$.

\begin{rem} \label{exam04}
Simple computations show that, in the case of $W^3$, for the elements \( x = t_1 \otimes t_2 \) and \( y = t_1^2 \otimes t_2^2 \) in \( BLLie^{\otimes}_{n-1} \), we obtain \( [x,y]_{\otimes} \neq -[y,x]_{\otimes} \). Similarly, in the case of \( S^3 \), for the elements \( x = t_1 \otimes t_2 \) and \( y = t_1^2 t_3 \otimes t_2^2 t_3 \) in \( BLLie^{\otimes}_{n-1} \), it also holds that \( [x,y]_{\otimes} \neq -[y,x]_{\otimes} \).

Therefore, Proposition 1 in \cite{Boyallian} and \cite{Boyallian1} is incorrect, even though those results were established in the linearly compact case. This discrepancy arises because the basic Leibniz algebras \( BLLie^{\otimes}_{n-1} \), associated with the \( n \)-Lie algebras \( W^n \) and \( S^n \), are non-Lie Leibniz algebras under the product \( [x,y]_{\otimes} \).
\end{rem}

Consider the subspace
$${\cal W_{n-1}}_{\otimes}=Span\{x_1\otimes\cdots \otimes x_{n-1} \ | \ x_i=x_j \ \mbox{for some} \ 1\leq i<j\leq n-1\}$$
of the space $\otimes^{n-1}\cal L$.

In \cite{Ladra} it was proved that ${\cal W_{n-1}}_{\otimes}$ is an ideal of a basic Leibniz algebra $BLLie^{\otimes}_{n-1}$. Similar to Remark \ref{rem2.3} one can get
${\cal W_{n-1}}_{\otimes}\nsubseteq \cal I$ and $\cal I\nsubseteq {\cal W_{n-1}}_{\otimes}$.
Further, we shall use notations
$$[x,y]_{\wedge}, \quad ad_{\wedge}, \quad \cal I_{\wedge}$$
that are obtained from the corresponding notations by replacing $\otimes$ to $\wedge$.

We denote by $BLLie^{\wedge}_{n-1}$ the Leibniz $n$-algebra $BLLie^{\otimes}_{n-1}/ {\cal W_{n-1}}_{\otimes}=(\wedge^{n-1}L, [ - , - ]_{\wedge}).$

It is obvious that ${\cal W_{n-1}}_{\otimes}\subseteq Ker(ad_{\otimes})$. Therefore, $BLLie^{\otimes}_{n-1}/ Ker(ad_{\otimes})$ is a Lie subalgebra of a Leibniz algebra $BLLie^{\wedge}_{n-1}.$

Let us introduce some notations

\begin{defn} A Lie algebra $\widehat{\cal L}:=BLLie^{\wedge}_{n-1}/ \mathcal{I}_{\wedge}$ with the product $[x,y]_{\wedge}$
is called {{\it the universal Lie algebra} associated with $n$-Lie algebra $\cal L$} and a Lie algebra $\widetilde{\cal L}:=BLLie^{\wedge}_{n-1}/ Ker(ad_{\wedge})$ is called  the Lie algebra associated with $n$-Lie algebra $L$.
\end{defn}

Thanks to Lemma \ref{idealI} we get $\widetilde{\cal L}$ is subalgebra of the Lie algebra $\widehat{\cal L}.$ Taking into account
$$[{ad_x}_{\wedge},{ad_y}_{\wedge}]={ad_x}_{\wedge}\circ {ad_y}_{\wedge} - {ad_y}_{\wedge}\circ {ad_x}_{\wedge}=
{ad_{[x,y]_{\wedge}}}_{\wedge},$$
we conclude that $ad$ is an epimorphism from Leibniz algebra $BLLie^{\wedge}_{n-1}$ onto the Lie algebra $InDer(\cal L)$. Therefore,
\begin{equation}\label{Inder}
InDer(\cal L)\cong BLLie^{\wedge}_{n-1}/Ker(ad_{\wedge}).
\end{equation}

It is remarkable that $Ker(ad_{\wedge}) \subseteq Center(\widehat{L})$ and $Ker(ad_{\wedge}) \subseteq Ann_l(BLLie^{\wedge}_{n-1}).$

\begin{rem} \label{counterexample} It should be noted that results in \cite{Boyallian} and \cite{Boyallian1} on descriptions of $Ker(ad_{\wedge})$ in the cases of $n$-Lie algebras $W^n$ and $S^n$ are not correct (also for linearly compact case). Indeed, in our case of $W^3$ we have
$f=5t_1^{4}t_2^{4}\wedge t_1^{6}t_2^{5}+t_1^{10}t_2^{7}\wedge t_2^2\in Ker(ad_{\wedge}).$ Similarly, taking $f=6t_1^{2}t_2^{4}t_3\wedge t_1^{3}t_2^{5}t_3-t_1^{2}t_2^{6}t_3^{2}\wedge t_1^{3}t_2^{3}\in S^3 \wedge S^3,$ one can check that $f\in Ker(ad_{\wedge})$.
\end{rem}

%
%
%

The following Lemma was proved in \cite{Boyallian1}.

\begin{lem} \label{lem3.4} If $a\in Ker(ad)$ and $\rho$ is a representation of the Lie algebra $\wedge^{n-1}L$, then $\rho(a)$  commutes with $\rho(b)$ for any $b\in \wedge^{n-1}L.$
\end{lem}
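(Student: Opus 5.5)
The argument uses only two facts about $a\in Ker(ad_{\wedge})$: it lies in the left annihilator of $BLLie^{\wedge}_{n-1}$, and the product $[b,a]_{\wedge}$ vanishes modulo $\mathcal{I}_{\wedge}$. Here I read ``representation $\rho$'' as a representation of the Lie algebra $\widehat{\cal L}=BLLie^{\wedge}_{n-1}/\mathcal{I}_{\wedge}$, or equivalently a representation of the Leibniz algebra $\wedge^{n-1}\cal L$ that vanishes on $\mathcal{I}_{\wedge}$. Thus $\rho([x,y]_{\wedge})=[\rho(x),\rho(y)]$ and $\rho([x,y]_{\wedge}+[y,x]_{\wedge})=0$. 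The goal is to show $[\rho(a),\rho(b)]=\rho([a,b]_{\wedge})=0$ for every $b$.

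\textbf{Step 1: $[a,b]_{\wedge}=0$ for all $b$.} Take $a=\sum_k a^{(k)}_1\wedge\dots\wedge a^{(k)}_{n-1}$ with $ad_a=0$, and $b=b_1\wedge\dots\wedge b_{n-1}$. By definition,
$$[a,b]_{\wedge}=\sum_{i=1}^{n-1} b_1\wedge\dots\wedge ad_a(b_i)\wedge\dots\wedge b_{n-1}.$$
Since $ad_a$ is linear in $a$, every summand vanishes, so $[a,b]_{\wedge}=0$. This is the inclusion $Ker(ad_{\wedge})\subseteq Ann_l(BLLie^{\wedge}_{n-1})$ mentioned above. It gives $\rho([a,b]_{\wedge})=0$, and if $\rho$ preserves brackets with $a$ on the left, we are already done.

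\textbf{Step 2: handling $\rho([b,a]_{\wedge})$.} The commutator could also be expanded using $[b,a]_{\wedge}$, so we need that term to vanish as well. Write $[b,a]_{\wedge}=([a,b]_{\wedge}+[b,a]_{\wedge})-[a,b]_{\wedge}$. The first bracket lies in $\mathcal{I}_{\wedge}$, so $\rho$ kills it; the second is $0$ by Step 1. Hence $\rho([b,a]_{\wedge})=0$, and $[\rho(a),\rho(b)]=0$ whichever ordering is used.

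\textbf{Main obstacle.} The only subtlety is the meaning of ``representation of the Lie algebra $\wedge^{n-1}L$''. Remark \ref{exam04} shows that $\wedge^{n-1}L$ is genuinely non-Lie, so I would state explicitly that $\rho$ factors through $\widehat{\cal L}$ (equivalently, kills $\mathcal{I}_{\wedge}$). That hypothesis is what makes Step 2 work. Without it, one can still conclude from Step 1 alone whenever $\rho$ is a homomorphism for the left-multiplication bracket.
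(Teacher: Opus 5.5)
Your proof is correct, and its key step is the one the paper uses. Note, though, that the paper gives no proof of Lemma \ref{lem3.4} as stated. It attributes the lemma to \cite{Boyallian1} and asserts that it fails in general, because $BLLie^{\wedge}_{n-1}$ is a non-Lie Leibniz algebra (Remark \ref{exam04}). It then replaces it by Lemma \ref{lem3.5}, stated for Leibniz representations $(\rho,\lambda)$.

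The proof of Lemma \ref{lem3.5} is exactly your Step 1. For $x\in Ker(ad_{\wedge})$ one has $[x,y]_{\wedge}=ad_x(y)=0$. Applying the three representation axioms to this zero element gives $[\rho(x),\rho(y)]=0$, together with the extra relations $\lambda(x)\rho(y)=\rho(y)\lambda(x)=-\lambda(x)\lambda(y)$.

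The real difference is how the ill-posed hypothesis is repaired:
\begin{itemize}
\item You keep the statement verbatim and restrict to bracket-preserving $\rho$ (equivalently, representations of $\widehat{\cal L}$). This shows the conclusion survives, and only the premise ``$\wedge^{n-1}L$ is a Lie algebra'' is at fault.
\item The paper switches to the correct Leibniz notion of representation, and thereby also controls the right action $\lambda$.
\end{itemize}

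Your Step 2 is redundant rather than an extra hypothesis. Any $\rho$ with $\rho([x,y]_{\wedge})=[\rho(x),\rho(y)]$ automatically kills $\mathcal{I}_{\wedge}$, since $\rho([x,y]_{\wedge}+[y,x]_{\wedge})=[\rho(x),\rho(y)]+[\rho(y),\rho(x)]=0$. Also $[\rho(b),\rho(a)]=-[\rho(a),\rho(b)]$ in any case. So Step 1 alone finishes the argument.
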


Since an algebra $\wedge^{n-1}L$ is non-Lie Leibniz algebras, in general, Lemma \ref{lem3.4} does not hold. However, taking into account that representation of Leibniz algebra is a pair $(\rho, \lambda)$ that satisfies the conditions:
$$\begin{array}{lllll}
1. &\rho([x,y]) = \rho(y)\rho(x) - \rho(x)\rho(y),\\[1mm]
2. &\lambda([x,y]) = \rho(y)\lambda(x) - \lambda(x)\rho(y),\\[1mm]
3. &\lambda([x,y]) = \rho(y)\lambda(x) + \lambda(x)\lambda(y).\\[1mm]
\end{array}
$$

In our notation $\wedge^{n-1}L$ means $BLLie^{\wedge}_{n-1}$. More details on representations of a Leibniz algebras can be find in \cite{Loday}. Thus, Lemma \ref{lem3.4} can be formulated in the following correct form.

\begin{lem} \label{lem3.5} If $x\in Ker(ad_{\wedge})$ and $(\rho, \lambda)$ is a representation of the Leibniz algebra $BLLie^{\wedge}_{n-1}$, then $\rho(x)$ and $\lambda(x)$ commute with $\rho(y)$ for any $y\in BLLie^{\wedge}_{n-1}.$ Moreover, $\lambda(x)\rho(y)=- \lambda(x)\lambda(y).$
\end{lem}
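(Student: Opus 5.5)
The plan is to use two ingredients: $x\in Ker(ad_{\wedge})$ is central in the "right" sense, and the three axioms of a Leibniz representation. First I record the algebraic facts about $x$. By definition of the bracket,
$$[y,x]_{\wedge}=\sum_i x_1\wedge\dots\wedge [y_1,\dots,y_{n-1},x_i]\wedge\dots\wedge x_{n-1},$$
and this need not vanish. However, $[x,y]_{\wedge}=\sum_i y_1\wedge\dots\wedge \mathrm{ad}_x(y_i)\wedge\dots\wedge y_{n-1}=0$, since $\mathrm{ad}_x=0$. Hence $x$ lies in the left annihilator, as already noted in the paper ($Ker(ad_{\wedge})\subseteq Ann_l$). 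I also need $[y,x]_{\wedge}\in Ker(ad_{\wedge})$, which holds because $Ker(ad_\wedge)$ is an ideal, and more usefully $[y,x]_{\wedge}+[x,y]_{\wedge}\in\mathcal I_\wedge$. The latter shows that $[y,x]_{\wedge}=[y,x]_\wedge+[x,y]_\wedge$ is a symmetric element.

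Next I plug $[x,y]_\wedge=0$ into the axioms with the pair $(x,y)$. Axiom 1 gives $0=\rho(y)\rho(x)-\rho(x)\rho(y)$, so $\rho(x)$ commutes with $\rho(y)$. Axiom 2 gives $0=\rho(y)\lambda(x)-\lambda(x)\rho(y)$, so $\lambda(x)$ commutes with $\rho(y)$. Axiom 3 gives $0=\rho(y)\lambda(x)+\lambda(x)\lambda(y)$. Substituting the commutation just obtained turns this into $\lambda(x)\rho(y)=-\lambda(x)\lambda(y)$, which is the "moreover" claim.

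So the whole proof reduces to the single fact $[x,y]_{\wedge}=0$ for $x\in Ker(ad_\wedge)$. This is immediate from the definition of the bracket on $\wedge^{n-1}\mathcal L$, because each summand contains $\mathrm{ad}_x(y_i)=[x_1,\dots,x_{n-1},y_i]=0$. The computation must be done on representatives of $x$ as a sum of decomposable elements, with linearity handling general $x$. The main point to check carefully is well-definedness on the quotient by $\mathcal W_{n-1}$ and the bracket convention (which slot acts). If the paper's convention were the opposite, I would use the other order of the pair in the axioms, taking $[y,x]$ and the roles of $\rho(y),\lambda(y)$ accordingly. The conclusion for $\rho$ would then follow the same way from axiom 1, and the identity for $\lambda$ would follow from combining axioms 2 and 3.
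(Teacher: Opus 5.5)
Your proposal is correct and follows the paper's proof: both plug $[x,y]_{\wedge}=\mathrm{ad}_x(y)=0$ into the three representation axioms, obtaining the two commutation relations from axioms 1 and 2, and the identity $\lambda(x)\rho(y)=-\lambda(x)\lambda(y)$ from axiom 3 combined with axiom 2. Your side remarks about $[y,x]_{\wedge}$, $\mathcal{I}_{\wedge}$ and the opposite bracket convention are never used and can be dropped; the paper's convention is the one your main argument uses.
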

\begin{proof} Let $x\in Ker(ad)$. Then
$$\begin{array}{llllll}
0=\rho(ad_x(y))=\rho([x,y])=\rho(x)\rho(y)-\rho(y)\rho(x),\\[1mm]
0=\lambda(ad_x(y))= \lambda([x,y]) = \rho(y)\lambda(x) - \lambda(x)\rho(y),\\[1mm]
0=\lambda(ad_x(y))=\lambda([x,y]) = \rho(y)\lambda(x) + \lambda(x)\lambda(y).\\[1mm]
\end{array}$$
\end{proof}

In the following diagram we summarize our observations regarding structures related to $n$-Lie algebras:
\begin{center}
\scriptsize{
\begin{tikzcd}
  &
  &
  &   \begin{array}{ccc}
    \mbox{Leibniz alg.}\\[1mm]
  \shortparallel\\[1mm]
  BLLie^{\wedge}_{n-1}
  \end{array}\arrow{r}
  &
      \begin{array}{ccc}
    \mbox{Lie alg.}\\[0.1mm]
  \shortparallel\\[0.1mm]
BLLie^{\wedge}_{n-1}/Ker(ad_{\wedge})\\[0.1mm]
\shortparallel\\[0.1mm]
InDer(\cal L)
  \end{array}

  \arrow[hookrightarrow]{dd}
  &

 \\
  \mbox{n-Lie alg.} \ \cal L
  \arrow{r}
  &
  \begin{array}{ccc}
    \mbox{Leibniz alg.}\\[0.1mm]
  \shortparallel\\[0.1mm]
BLLie^{\otimes}_{n-1}
  \end{array}\arrow{r}
  &
    \begin{array}{ccc}
    \mbox{Lie alg.}\\[0.1mm]
  \shortparallel\\[0.1mm]
BLLie^{\otimes}_{n-1}/Ker(ad_{\otimes})
  \end{array} \quad \quad \quad \arrow[hookrightarrow]{ur}\arrow[hookrightarrow]{dr}
  &
  & \\
  &
  &
  &
\begin{array}{ccc}
    \mbox{Lie alg.}\\[1mm]
  \shortparallel\\[1mm]
  BLLie^{\otimes}_{n-1}/\cal I_{\otimes}
  \end{array}\arrow{r}
  &BLLie^{\wedge}_{n-1}/\cal I_{\wedge}
\end{tikzcd}}
\end{center}

\


\section{Modules over $n$-Lie algebras}

\

In this section we give the notion of module over an $n$-Lie algebra $\cal L$ and its relation with module over Lie algebra $InDer(\cal L)$.

%

\begin{defn} A vector space $M$ is called a module over an $n$-Lie algebra $\cal L$ is $\cal L\oplus M$ is $n$-Lie algebra such that $\cal L$ is its subalgebra and $M$ is an abelian ideal.
\end{defn}
In other words, the vector space $\cal L\oplus \cal M$ can be equipped with an $n$-ary product $[-, \dots, -]$ such that
$[x_1, \dots, x_n]=0$ if at least two arguments belong to $\cal M$ and

\begin{equation}\label{mod1}
[x_1, \dots, x_i, m, x_{i+1}, \dots, x_n]=-[x_1, \dots, x_i, x_{i+1}, m, \dots, x_n], \ 1\leq i <n,
\end{equation}

\begin{equation}\label{mod2}\begin{array}{cc}
[x_1, \dots, x_{n-1}, [y_1, \dots, y_{n-1}, m]]- [y_1, \dots, y_{n-1}, [x_1, \dots, x_{n-1}, m]]=&\\
\sum\limits_{i=1}^{n-1}[y_1, \dots, y_{i-1}, [x_1, \dots, x_{n-1},y_i] ,y_{i+1}, \dots, y_{n-1},m],&
\end{array}
\end{equation}

\begin{equation}\label{mod3}
[x_1, \dots, x_{n-2},[y_1, \dots, y_{n}],m]=
\sum_{i=1}^{n}[y_1, \dots, y_{i-1}, [x_1, \dots, x_{n-2},y_i,m], y_{i+1}, \dots, y_n].
\end{equation}

Note that Equality (\ref{mod2}) can be rewritten as follows:

\begin{equation}\label{mod4}
ad_{x_1 \wedge \dots \wedge x_{n-1}} ad_{y_1\wedge \dots \wedge y_{n-1}}(m)-
ad_{y_1\wedge \dots \wedge y_{n-1}}ad_{x_1 \wedge \dots \wedge x_{n-1}}(m)=
ad_{ad_x(y)}(m).
\end{equation}

Now we shall try to find the correspondence between modules of the Lie algebra $\widetilde{\cal L}\simeq InDer(\cal L)$ and $n$-Lie algebra $\cal L$.

We recall, that $\cal N$ is a module over $\widetilde{\cal L}$ if $\widetilde{\cal L}\oplus \cal N$ is a Lie algebra, that is, $[x,y]*n=ad_x(y)*n=x*(y*n)-y*(x*n)$ and $x*n=-n*x$.

From Equality (\ref{mod4}) we can conclude that any module $\cal M$ over $n$-Lie algebra $\cal L$ is a module over Lie algebra $\widetilde{\cal L}$ with the action:
$$x*m=ad_x*m=[x_1, \dots, x_{n-1},m].$$

Indeed, from Equality (\ref{mod4}) we have
$$[x,y]*m=ad_x(y)*m=
ad_x*(ad_y*m)-ad_y* (ad_x*m)=x *(y*m)-y*(x*m).$$

Let us rewrite Equality (\ref{mod3}) in the following form:

$$[ x_1, \dots, x_{n-2},[y_1, \dots, y_{n}],m]=
\sum\limits_{i=1}^{n}(-1)^{n+i}[y_1, \dots, y_{i-1}, y_{i+1}, \dots, y_n, [x_1, \dots, x_{n-2},y_i,m]].$$

In terms of the action on module it has the form (the Balanced Equality):
\begin{equation}\label{eq3.7}
ad_{x_1 \wedge \dots \wedge x_{n-2}\wedge [y_1, \dots, y_{n}]} *m=
\sum\limits_{i=1}^{n}(-1)^{n+i} ad_{y_1 \wedge \dots \wedge y_{i-1} \wedge y_{i+1} \wedge \dots \wedge y_n} * (ad_{x_1 \wedge \dots \wedge x_{n-2}\wedge y_i} *m).
\end{equation}

We denote by $\varphi$ an isomorphism between the Lie algebras $\widetilde{\cal L}$ and $InDer(\cal L).$

Let $M$ be a module over the Lie algebra $InDer(\cal L)$. Then we define on $M$ the $n$-Lie module structure over $\cal L$ as follows:

\begin{equation}\label{eqaction}
\begin{array}{llll}
[x_1, \dots, x_{i-1}, m, x_{i+1},\dots, x_n]:= -
  [x_1, \dots, x_{i}, m, x_{i+2}, \dots, x_n], \quad i<n;\\[3mm]
[x_1, \dots, x_{n-1}, m]:=ad_{x_1\wedge \dots \wedge x_{n-1}}(m), \quad where \quad ad_{x_1\wedge \dots \wedge x_{n-1}}(m)=\varphi(x_1\wedge \dots \wedge x_{n-1})(m).\\[3mm]
\end{array}
\end{equation}

Summarizing the arguments used above we obtain the relation between modules over $n$-Lie algebra $\cal L$ and modules over its Lie algebra $\widetilde{\cal L}$.
\begin{prop} \label{prop3.3} Any module $\cal M$ over $n$-Lie algebra $\cal L$ is a module over Lie algebra $\widetilde{\cal L}$ with the action
$x*m=[x_1, \dots, x_{n-1},m]$ which satisfies Equality (\ref{eq3.7}) and any module over Lie algebra $\widetilde{\cal L}$ which satisfy Equality (\ref{eq3.7}) is a module over $n$-Lie algebra $\cal L$.
\end{prop}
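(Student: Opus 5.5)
The plan is to prove the two directions separately, both by unwinding the definition of a module over an $n$-Lie algebra into the three conditions (\ref{mod1})--(\ref{mod3}), and to use the identification $\widetilde{\cal L}=BLLie^{\wedge}_{n-1}/Ker(ad_{\wedge})\cong InDer(\cal L)$ from (\ref{Inder}).

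\emph{From $n$-Lie modules to $\widetilde{\cal L}$-modules.} Let $\cal M$ be a module over $\cal L$. By (\ref{mod1}), the map $(x_1,\dots,x_{n-1})\mapsto [x_1,\dots,x_{n-1},m]$ is skew-symmetric and multilinear, so it factors through $\wedge^{n-1}\cal L$. The first step is to check that it also factors through the quotient by $Ker(ad_{\wedge})$. That is, if $x=\sum x_1\wedge\dots\wedge x_{n-1}\in Ker(ad_\wedge)$, then $x*m=0$. I expect this to be the main obstacle. The cleanest route is to write elements of $\cal L$ as brackets and apply (\ref{mod3}). For simple $\cal L$ (the case of interest, where $\cal L=\cal L^n$), one writes $m$-actions in terms of $ad$ of elements of $\cal L$ and shows that the operator $x\mapsto x*m$ depends only on $ad_x$. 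Otherwise, one interprets the statement as concerning actions of $BLLie^{\wedge}_{n-1}$ that vanish on $Ker(ad_\wedge)$, which is the implicit convention of (\ref{eqaction}). Granting this, the Lie module axiom $[x,y]*m=x*(y*m)-y*(x*m)$ is exactly (\ref{mod4}), which is a rewriting of (\ref{mod2}) using $[x,y]_\wedge=ad_x(y)$. Finally, the Balanced Equality (\ref{eq3.7}) is obtained from (\ref{mod3}) by moving $m$ to the last slot in each summand with (\ref{mod1}). This produces the signs $(-1)^{n+i}$, which come from permuting $[x_1,\dots,x_{n-2},y_i,m]$ into the last position among $y_1,\dots,y_n$.

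\emph{From $\widetilde{\cal L}$-modules to $n$-Lie modules.} Conversely, given an $\widetilde{\cal L}$-module $M$ satisfying (\ref{eq3.7}), define the $n$-ary product on $\cal L\oplus M$ by (\ref{eqaction}), extended by skew-symmetry, with all brackets having two or more arguments in $M$ set to zero. Skew-symmetry, and hence (\ref{mod1}), holds by construction. One checks that the definition is consistent: the transposition rule in (\ref{eqaction}) generates the sign action of $S_n$. Condition (\ref{mod2}) follows from the Lie module axiom together with $\varphi$ being a homomorphism, since $\varphi([x,y]_\wedge)=[\varphi(x),\varphi(y)]$. Condition (\ref{mod3}) follows from (\ref{eq3.7}) by reversing the sign manipulation of the first part.

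It remains to confirm that the fundamental identity (\ref{identity1}) holds on all of $\cal L\oplus M$. Because of multilinearity and skew-symmetry, each instance of the identity has at most one argument from $M$; any instance with two or more vanishes on both sides. Three cases remain. If no argument lies in $M$, the identity holds because $\cal L$ is an $n$-Lie algebra. If $m$ is among the $y$'s, the identity reduces to (\ref{mod2}) after moving $m$ to the last slot. If $m$ is among the $x$'s, it reduces to (\ref{mod3}). These reductions are routine sign bookkeeping. The only substantive point of the whole proof is the well-definedness modulo $Ker(ad_\wedge)$ noted in the first part.
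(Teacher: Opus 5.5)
\textbf{Verdict: genuine gap.} Your route is the paper's: \eqref{mod2} $\Leftrightarrow$ \eqref{mod4} $\Leftrightarrow$ the Lie-module axiom, \eqref{mod3} $\Leftrightarrow$ \eqref{eq3.7} by moving the inner bracket to the last slot with sign $(-1)^{n-i}$, and the converse via \eqref{eqaction}. That bookkeeping, including your case analysis of the fundamental identity on $\mathcal{L}\oplus M$, is fine. The gap is the step you flag and then grant: nothing you write shows that $x\in Ker(ad_\wedge)$ forces $[x_1,\dots,x_{n-1},m]=0$. Your sketch for simple $\mathcal{L}$ is not an argument. \eqref{mod3} expresses $\rho(x_1\wedge\dots\wedge x_{n-2}\wedge[y_1,\dots,y_n])$ through products of two $\rho$'s, not through $ad$. \eqref{mod2} gives only that $\rho(x)$ commutes with every $\rho(y)$ and that $\rho$ kills $[\wedge^{n-1}\mathcal{L},Ker(ad_\wedge)]_\wedge$, not that $\rho(x)=0$. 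Your fallback ``interpretation'' changes the statement rather than proving it. The paper's argument has the same hole: it writes $x*m=ad_x*m$ without justification.

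Moreover, the step cannot be closed in the stated generality. Take $n\ge 3$, $\mathcal{L}$ abelian of dimension $n-1$, any $T\in\mathrm{End}(M)$, and set $[x_1,\dots,x_{n-1},m]=\det(x_1,\dots,x_{n-1})\,Tm$.
\begin{itemize}
\item \eqref{mod2} holds: the left side is $\det(x)\det(y)(T^2-T^2)m=0$, and the right side vanishes because $[x,y_i]=0$.
\item \eqref{mod3} holds: its left side is $0$, and its right side is
\[
(-1)^n\det\Big(x_1,\dots,x_{n-2},\sum_{i=1}^n(-1)^i\det(y_1,\dots,\widehat{y_i},\dots,y_n)\,y_i\Big)T^2m=0,
\]
since that inner sum vanishes for any $n$ vectors in an $(n-1)$-dimensional space.
\end{itemize}
So $M$ is an $\mathcal{L}$-module. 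But $Ker(ad_\wedge)=\wedge^{n-1}\mathcal{L}$ and $\widetilde{\mathcal{L}}=0$, so the action does not factor through $\widetilde{\mathcal{L}}$ when $T\neq 0$. For $n=2$ the claim would say every module of a Lie algebra $L$ is an $L/Z(L)$-module, which is also false.

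What \eqref{mod4} does give for free is $\rho([x,y]_\wedge+[y,x]_\wedge)=0$. Hence the action always factors through the universal Lie algebra $\widehat{\mathcal{L}}=BLLie^{\wedge}_{n-1}/\mathcal{I}_\wedge$, of which $\widetilde{\mathcal{L}}$ is a quotient. The correct equivalence is between $\mathcal{L}$-modules and $\widehat{\mathcal{L}}$-modules satisfying \eqref{eq3.7}. Descending to $\widetilde{\mathcal{L}}$ needs the extra hypothesis that $Ker(ad_\wedge)$ acts trivially, which is automatic for $O^n$ since $ad_\wedge$ is injective there. The converse half of your proof is correct as written.
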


%
%


Similar to \cite{Dzhum} one can establish that an $n$-Lie module $\cal M$ of $\cal L$ is irreducible/completely reducible if and only if it is irreducible/completely reducible as a module of the Lie algebra
$\widetilde{\cal L}$.


\

\section{Irreducible cuspidal $W_{n-1}$ and $S_n$-modules.}\label{Sec5}

\

In this section we give definitions and some preliminary results on cuspidal modules over Lie algebras $W_{n-1}$ and $S_n$, as well their relations with $InDer(W^{n})$ and $InDer(S^{n})$, respectively.

Denote by $\cal A_{n}=\mathbb{C}[t_1^{\pm 1}, \dots, t_{n}^{\pm 1}]$ the algebra of Laurent polynomials and consider the column vector space $\mathbb{C}^{n}$ with standard basis
$\{e_1, \dots, e_{n}\}$. Let $(\cdot | \cdot)$  be symmetric bilinear form $(u|v)=u^{T}v\in \mathbb{C}$, where $u, v \in \mathbb{C}^{n}$ and $u^{T}$ denotes matrix transpose.

Homogeneous (in the power of $t$) elements of $Der(\cal A_{n})$ we shall denote by $D(u, r)=\sum\limits_{i=1}^{n}u_it^{r}d_i,$ where $u\in \mathbb{C}^{n}, t^r=t_1^{r_1}\dots t_{n}^{r_{n}}$ for $r=(r_1, \dots, r_{n})\in \mathbb{Z}^{n}$ and $d_i=t_i\frac{\partial}{\partial t_i}$. Then the Lie bracket is given by
\begin{equation}\label{eqLiebracket}
[D(u, r), D(v, s)]=D((u|s)v - (v|r)u, r +s)
\end{equation}
for $u, v \in \mathbb{C}^{n}$ and $r, s\in \mathbb{Z}^{n}$.

The Lie algebra $Der(\cal A_{n})$ (denoted by $W_{n}$) has a natural structure of an $\cal A_{n}$-module of free rank $n$.

Note that geometrically, $W_{n}$ may be interpreted as the Lie algebra of (complex-valued) polynomial vector fields on an $n$-dimensional torus via the mapping $t_i= e^{\sqrt{-1}\theta_i}$ all $i\in \{1, \dots, n\}$, where $\theta_i$ is the $i$th angular coordinate. In fact, the change of coordinates $t_i=e^{\sqrt{-1}\theta_i}$, gives $\sqrt{-1}d_i=\sqrt{-1}t_i\frac{\partial}{\partial t_i}=\frac{\partial t_i}{\partial \theta_i}\cdot \frac{\partial}{\partial t_i}=\frac{\partial}{\partial \theta_i}.$ Therefore, an element $X =\sum\limits_{i=1}^{n}f_id_i\in W_{n}$ can be written in the form $X=\sqrt{-1}\sum\limits_{i=1}^{n}f_i\frac{\partial}{\partial \theta_i}$.

It is well-known that $W_{n}=\bigoplus\limits_{i=1}^{n}\cal A_{n}d_i,$ with $d_i=t_i\frac{\partial}{\partial t_i}, \ 1\leq i \leq n.$
Then \eqref{eqLiebracket} has the form
$$[t^rd_i, t^s d_j]=s_it^{r+s}d_j - r_jt^{r+s}d_i, \ r, s\in \mathbb{Z}^{n}, \ 1\leq i, j \leq n.$$
The subspace spanned by $\{d_1, \dots, d_n\}$ is the Cartan subalgebra in $W_n$  and the adjoint action of these elements induces a $\mathbb{Z}^n$-grading.

It is remarkable that the Lie algebra $W_{n}$ has an interesting subalgebra, the Lie algebra of divergence-zero vector fields, denoted $S_{n}$.
The divergence of $X$ w.r.t. the natural volume form in angular coordinates is then
$-\sqrt{-1}\sum\limits_{i=1}^{n}\frac{\partial f_i}{\partial \theta_i}=
\sum\limits_{i=1}^{n}t_i\frac{\partial f_i}{\partial t_i}.$

In \cite{Talboom} it is proved that $D(u,r) \in S_{n}$ if and only if $(u|r)=0.$ Moreover, for $D(u,r), D(v,s)\in S_{n}$,
$((u|s)v - (v|r)u | r + s) = 0,$ demonstrating that $S_{n}$ is closed under the Lie bracket.

Setting $d_{ij}(r)=r_jt^rd_i - r_it^rd_j,$
we get $S_{n}=Span\{d_i, d_{ij}(r) \ | \ 1\leq i, j \leq n, r\in \mathbb{Z}^{n}\}.$  The Lie bracket of $S_{n}$ in terms of the elements $d_{ij}(r)$ has the following form
$$\begin{array}{llllll}
[d_i, d_{pq}(r)]&=&r_id_{pq}(r),\\[3mm]
[d_{ij}(r), d_{pq}(s)]&=&r_js_pd_{iq}(r + s) - r_js_qd_{ip}(r+s) -
r_is_pd_{jq}(r + s) + r_is_qd_{jp}(r + s),
\end{array}$$
with $r,s\in \mathbb{Z}^{n}$ and $1\leq i, j, p, q \leq n$.

In addition, we assume $d_{ij}(0)=0, \ d_{ii}(r)=0$ and $d_{ji}(r)= -d_{ij}(r)$. For $n\geq 3$ $d_{ij}(r)=0$ in the case $r_i=r_j=0$ and in general,
$r_pd_{ij}(r)+r_{i}d_{jp}(r)+r_jd_{pi}(r)=0.$

Note that
$$\begin{array}{llllll}
d_{i}&=&D(u,0) & with & u^{T}=(0, \dots, 0, \underbrace{1}_{i}, 0, \dots, 0), \\[3mm]
d_{ij}(r)&=&D(u,r) & with & u^{T}=(0, \dots, 0, \underbrace{r_j}_{i}, 0, \dots, 0, \underbrace{-r_i}_{j}, 0, \dots, 0)\\[3mm]
\end{array}$$

Note that the Cartan subalgebras of $S_n$ and $W_n$ are coincide. A $W_n$-module $M$ (respectively, a $S_n$-module) is called {\it a weight module} if $M=\bigoplus\limits_{\lambda\in \mathbb{C}^n}M_{\lambda}$, where the weight space $M_{\lambda}$ is defined as $M_{\lambda}=\{v \in M \ | \  d_iv = \lambda_i v \quad \mbox{for} \quad i=1, \dots, n\}$.
In particular, $W_n$ and $S_n$ are weight modules over themselves. A weight module is called {\it cuspidal} if the dimensions of its weight spaces are uniformly bounded by some constant.

\

\subsection{Irreducible cuspidal $W_{n-1}$-modules}

\

\

Let $U$ be a finite-dimensional simple $gl_{n-1}$-module and let $\alpha\in \mathbb{C}^{n-1}$. Then the module of
tensor fields $T(U, \alpha)$ is the space
$$T(U, \alpha)=t^{\alpha}\cal A_{n-1}\otimes U$$
with a $W_{n-1}$-action given as follows:
\begin{equation}\label{action0}
t^{m}d_j*(t^s\otimes u) =s_jt^{s+m}\otimes u + \sum_{i=1}^{n-1}m_it^{s+m}\otimes E_{i,j}u,
\end{equation}
where $m\in \mathbb{Z}^{n-1}, s \in \alpha + \mathbb{Z}^{n-1}, u \in U$ and $E_{i,j}$ are matrix units.

Although $d_j=t^j\frac{\partial}{\partial t_j},$ for more convenient computational purposes we shall formally assume that $d_j(t^s)=s_jt^s$ for $s \in \alpha + \mathbb{Z}^{n-1}$. This enables us to reformulate the action \eqref{action0} in the form:
\begin{equation}\label{action1}
t^{m}d_j*(t^s\otimes u) =t^{m}d_j(t^{s})\otimes u + t^s\sum_{i=1}^{n-1}d_i(t^{m})\otimes E_{i,j}u.
\end{equation}

Let $V$ be the natural $(n-1)$-dimensional representation of $gl_{n-1}$ and let $\wedge^sV$ be its $s$-th exterior power ($0\leq s \leq n-1$). Note that $\wedge^sV$ is a simple $gl_{n-1}$-module for all $s=0, \dots, n-1.$ This module has highest weight vector $e_1\wedge \cdots \wedge e_s$ with the standard basis $\{e_1, \dots, e_{n-1}\}$ of $V$ and its action is defined as follows:
$$X*(v_1\wedge \cdots \wedge v_s)=\sum\limits_{i=1}^{k}v_1 \wedge \cdots \wedge Xv_i \wedge \cdots \wedge v_s, \quad X\in gl_{n-1}, \quad v_i\in V$$
along with $E_{i,j}e_l=\delta_{j,l}e_i$.

The modules
$$\Omega^s(\alpha) = T (\wedge^sV, \alpha)=t^{\alpha}\cal A_{\mathbb{C}}\otimes \wedge^sV$$
are the modules of differential $s$-forms. These modules form the de Rham complex
$$\Omega^0(\alpha) \xrightarrow[]{d} \Omega^1(\alpha) \xrightarrow[]{d} \dots \xrightarrow[]{d} \Omega^{n-1}(\alpha).$$
The differential $d$ of the de Rham complex is a homomorphism of $W_{n-1}$-modules.

Recall the action of $d$ on $\Omega^s(\alpha)$:
$$d(t^r\otimes v_1 \wedge \cdots \wedge v_s)=
\sum\limits_{i=1}^{n-1}d_i(t^r)\otimes e_i\wedge v_1 \wedge \cdots \wedge v_s.$$

The following result presents the description of simple $W_{n-1}$-modules with finite-dimensional weight spaces \cite{Slava}.

\begin{thm} \label{mainthm}
\

(1) Every simple $W_{n-1}$-module with finite-dimensional weight spaces
is either cuspidal or of the highest weight type.

(2) A simple cuspidal $W_{n-1}$-module is isomorphic to one of the following:
\begin{itemize}
\item a module of tensor fields $T(U, \alpha)$, where $\alpha\in \mathbb{C}^{n-1}$ and $U$ is a finite-dimensional
simple $gl_{n-1}$-module different from the $s$-th exterior power of the natural $(n-1)$-dimensional
$gl_{n-1}$-module, $0 \leq s \leq n-2;$
\item a submodule $d(\Omega^s(\alpha))\subseteq \Omega^{s+1}(\alpha),  \ 0 \leq s\leq n-2;$
\item a trivial 1-dimensional $W_{n-1}$-module.
\end{itemize}

(3) A module of the highest weight type is isomorphic to $L(U, \alpha, \gamma)^g$ for some finite-dimensional simple $gl_{n-2}$-module $U,$ $\alpha\in \mathbb{C}^{n-2}, \gamma \in \mathbb{C}$ and $g\in GL_{n-1}(\mathbb{Z}).$
\end{thm}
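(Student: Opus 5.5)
The statement is the classification of simple $W_{n-1}$-modules with finite-dimensional weight spaces from \cite{Slava}. In this paper it is only quoted and is then used as a black box. Still, here is how I would organise a proof, following the strategy of Billig and Futorny. I would treat parts (1) and (3) together as the ``non-cuspidal'' analysis, and part (2) via the theory of $\mathcal A_{n-1}$-covers.

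\textbf{Parts (1) and (3).} I would use a Mathieu--Mazorchuk--Zhao type argument. Let $M$ be a simple weight module with finite-dimensional weight spaces. For each $\lambda$ in the support, consider the one-dimensional subalgebras spanned by $t^{km}d_j$ with $k\in\mathbb Z$.

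Suppose $M$ is not cuspidal, so the weight-space dimensions are unbounded. Then there is a direction $m\in\mathbb Z^{n-1}$ and a weight vector annihilated by all $t^{r}d_j$ with $(\ell|r)>0$, for a suitable linear functional $\ell$. After applying some $g\in GL_{n-1}(\mathbb Z)$, take $\ell$ to be the last coordinate. The ``top'' space is then a simple cuspidal module over
\[
\mathcal A_{n-2}\otimes(\mathbb C\, d_{n-1}) \rtimes W_{n-2}.
\]
Simplicity and the generalized Verma construction then give $M\cong L(U,\alpha,\gamma)^g$. Here the top is recognised as a tensor-type module $T(U,\alpha)$ for $W_{n-2}$, on which $d_{n-1}$ acts by the scalar $\gamma$.

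\textbf{Part (2).} The route is through $\mathcal A W$-modules. Given a simple cuspidal $M$, I would form its cover
\[
\widehat M=\operatorname{span}\{\psi(x,f)\},\qquad \psi(x,f)\in \operatorname{Hom}(W_{n-1},M),\quad \psi(x,f)(y)=(f y)\cdot x .
\]
This $\widehat M$ is naturally a module for $\mathcal A_{n-1}\rtimes W_{n-1}$, and $W_{n-1}\otimes M\to M$ induces a surjection $\widehat M\to W_{n-1}M=M$ (for nontrivial $M$).

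The key step is to show that $\widehat M$ is again cuspidal. For that, I would prove that the operators
\[
\Omega^{(k)}_{m,s}=\sum_{i}(-1)^i\binom{k}{i}\,t^{m-is}d_j\, t^{is}d_l
\]
annihilate $M$ for $k$ large. This is the ``polynomiality'' of the action in $m$, obtained by restricting to finite-dimensional $\mathfrak{sl}_2$-like subalgebras and bounding dimensions.

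Next I would classify cuspidal $\mathcal A W$-modules. Polynomiality lets the action factor through the Lie algebra $W^+_{n-1}$ of polynomial vector fields at a point, acting on a finite-dimensional space. Its nilpotent radical acts trivially on a simple quotient, which leaves a $gl_{n-1}$-module. Hence every simple cuspidal $\mathcal A W$-module is $T(U,\alpha)$.

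Finally, $M$ is a simple quotient of some $T(U,\alpha)$. If $U$ is not an exterior power $\wedge^sV$, then $T(U,\alpha)$ is already simple. Otherwise the de Rham complex and the known (co)homology of $\Omega^\bullet(\alpha)$ identify the simple subquotients as $d\Omega^s(\alpha)$, together with the trivial module.

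\textbf{Main obstacle.} I expect the hardest step to be the polynomiality/annihilator step, i.e. finiteness of the cover. It requires uniform control on weight multiplicities along arithmetic progressions. I would first try to obtain it from $\mathfrak{sl}_2$-subalgebras spanned by $t^{\pm s}d_j$ and $d_j$, using the Mathieu--Mazorchuk bound on cuspidal Virasoro modules.
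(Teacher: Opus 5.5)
The paper gives no proof of this statement. It is quoted from \cite{Slava} and used as a black box, so treating it that way agrees with the paper. Your outline also follows the architecture of the cited Billig--Futorny argument, with part (1) going back to Mazorchuk--Zhao: $\mathcal A$-cover, annihilating differentiators, classification of cuspidal $\mathcal AW$-modules via vector fields vanishing at a point, then the structure of tensor modules and the de Rham complex.

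\textbf{The cover is set up incorrectly.} Read as a proof sketch, the central construction does not do what you claim. For your $\psi(x,f)\in\operatorname{Hom}(W_{n-1},M)$, $\psi(x,f)(y)=(fy)x$, one computes $z\psi(x,f)=\psi(zx,f)+\psi(x,z(f))$. So the natural $W_{n-1}$-map goes \emph{into} the span, $x\mapsto\psi(x,1)$, and nothing in your definition gives a map onto $M$. The surjection you invoke belongs to the cover
\[
\widehat M=\operatorname{span}\{\psi(y,u)\mid y\in W_{n-1},\ u\in M\}\subset\operatorname{Hom}(\mathcal A_{n-1},M),\qquad \psi(y,u)(f)=(fy)u,
\]
with $\pi(\psi)=\psi(1)$, so that $\pi(\widehat M)=W_{n-1}M$. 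An embedding of $M$ into a cuspidal $\mathcal AW$-module could also be made to work, but then the last step has to be about simple submodules of $T(U,\alpha)$, not quotients.

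\textbf{Two further points.}

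\emph{The differentiators.} The annihilation result needed for finiteness of $\widehat M_\mu$ is the one with independent indices, $\sum_i(-1)^i\binom ki\,t^{m-is}d_j\,t^{r+is}d_l$ with $m,r,s$ all free. Evaluating $\psi(t^rd_l,u)$ at $t^s$ gives $t^{r+s}d_l\,u$, and $u$ itself varies with $r$. Your family is only the special case $r=0$.

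\emph{Parts (1) and (3) are restated rather than argued.} For (1), the existence of a weight vector killed by $W^+_\ell$ (with $\ell$ integral) for a non-cuspidal simple module is precisely the Mazorchuk--Zhao theorem. The subalgebras $\{t^{km}d_j\}_{k\in\mathbb Z}$ are rank-one Witt-type algebras, not one-dimensional. One-sided behaviour of the support along such lines still has to be assembled into a half-space.

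For (3), it is automatic that $d_{n-1}$ acts on the top by a scalar, since it is central in $\mathcal A_{n-2}d_{n-1}\rtimes W_{n-2}$. The substantive step is to show that the whole abelian ideal $\mathcal A_{n-2}d_{n-1}$ acts on the top by the multiplication operators $\gamma t^m$. Only then (after rescaling, when $\gamma\neq0$) is the top a cuspidal $\mathcal AW$-module for $W_{n-2}$ to which the tensor-module classification applies.
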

Note that $\widetilde{\cal W}^n\cong InDer(\cal W^n)$ and $InDer(\cal W^n)\cong W_{n-1}$ via isomorphism
$$\psi \ : \ ad_{f_1\wedge \dots \wedge f_{n-1}} \to \sum_{k=1}^{n-1}(-1)^{n+1-k}Wr^k_{f_1, \dots, f_{n-1}}d_k,
$$
where $Wr^k_{f_1, \dots, f_{n-1}}$ is the determinant obtained from $[f_1, \dots, f_{n-1}]$  (see  Example \ref{exam1.2}) by deleting its $k$-th row and the last column.

\

\subsection{Irreducible cuspidal $S_{n}$-modules}

\

\

Let $V(\lambda)$ be a finite-dimensional irreducible $sl_{n}$-module of highest weight $\lambda=\sum\limits_{i=1}^{n}a_i\omega_i$ with $a_i\in Z_{\geq 0}$ and $\omega_i$ fundamental dominant weights. Let us consider the restriction of Larsson's functor to $sl_n$-modules. Then
$$F^{\alpha} \ : \ sl_n-\mbox{modules}  \ \rightarrow  \ S_n-\mbox{modules}$$
and $F^{\alpha}(\lambda)=\cal A_n \otimes V(\lambda)$ for $\alpha\in \mathbb{C}^{n}$. In fact, the Larsson's functor $F^{\alpha}$ was defined in \cite{Larsson}, \cite{Sheng} and studied in \cite{Rao}. Homogeneous elements of $F^{\alpha}(\lambda)$ will be denoted $v(s)=t^s\otimes v$ for $v\in V(\lambda), s\in \mathbb{Z}^{n}$. Then $F^{\alpha}(\lambda)$ becomes a $S_{n}$-module via the action
\begin{equation}\label{eq7.1}
D(u,r)*v(s)=(u|s+\alpha)v(s+r) + (ru^{T})v(s+r),
\end{equation}
for $D(u,r)\in S_{n}$. Note that $ru^{T}$ is an $n\times n$ traceless matrix, as $(u|r)=0$ by assumption, and so the second term involves the $sl_{n}$-module action.

Setting $F^{\alpha}(\lambda)=\bigoplus\limits_{s\in \mathbb{Z}^{n}}t^s\otimes V(\lambda)$ and
taking into account that ${d_i}*(t^s\otimes v)=s_it^{s}\otimes v$ we derive that the subspace
$t^s \otimes V(\lambda)$ is homogeneous in $t$ and, hence $F^{\alpha}(\lambda)$ is a graded module w.r.t. the action of the Cartan subalgebra.

Let us rewrite \eqref{eq7.1} in the form presented in \cite{Rao}:

\begin{equation}\label{eq7.2}
D(u,r)*(t^s\otimes v)=(u|s+\alpha)t^{s+r}\otimes v
+ \sum\limits_{i,j=1}^{n}r_iu_jt^{s+r}\otimes E_{i,j}v,
\end{equation}

This action in basis elements of $S_n$ has the following form:
\begin{equation}\label{eq7.33}\begin{array}{lllllllll}
{d_i}*(t^s\otimes v)&=&(s_i+\alpha_i)t^{s}\otimes v, \\[3mm]
d_{ij}(r)*(t^s\otimes v)&=&
\Big(r_j(s_i+\alpha_i)-r_i(s_j+\alpha_j)\Big)t^{s+r}\otimes v\\[3mm]
&+&t^{s+r}\otimes \Big(\sum\limits_{p=1, p\neq i}^{n}r_{p}r_{j}E_{p,i}-
\sum\limits_{p=1, p\neq j}^{n}r_{p}r_{i}E_{p,j}+
r_ir_j(E_{i,i}-E_{j,j})\Big)v.\\[3mm]
\end{array}
\end{equation}

Now we consider the next irreducible $S_n$-module in the list presented in \cite{Talboom}. Let $V$ be a natural $sl_{n}$-module. The $k$-fold wedge product $\wedge^kV$ is a highest weight $sl_{n}$-module. By virtue of $(E_{i,i}-E_{i+1, i+1})(e_1\wedge \cdots \wedge e_k)= \delta_{i,k}e_1\wedge \cdots \wedge e_k$ we deduce  that $\wedge^kV$ has the highest weight $\omega_k$, that is, $\wedge^kV \cong V(\omega_k)$ for $1\leq k \leq n-1$, and
$\wedge^0V=\wedge^{n}V\cong V(0)$. For convenience, we set
$\omega_0=\omega_{n}=0$, so that $V(0)=V(\omega_0)=V(\omega_{n}).$

In \cite{Rao} some $W_n$-submodules of $F^{\alpha}(\omega_k) 
$
via action
\begin{equation}\label{eq7.55}
\begin{array}{llll}
D(u,r)*(t^s\otimes v_1\wedge \dots \wedge v_k)&=&
(u | s+\alpha)t^{s+r}\otimes v_1\wedge \cdots \wedge v_k\\[3mm]
&+&\sum\limits_{p=1}^{k}(u|v_p)t^{s+r}\otimes
v_1\wedge \cdots \wedge v_{p-1}\wedge r \wedge v_{p+1} \wedge \cdots
\wedge v_k\\[3mm]
\end{array}
\end{equation}
are identified.

As $S_n$-modules they are determined in \cite{Talboom} as follows:
$$W_k^{\alpha}=\bigoplus\limits_{s\in \mathbb{Z}^{n}}t^s\otimes \mathbb{C}(s+\alpha)\wedge (\wedge^{k-1}V).$$

Then the action \eqref{eq7.55} of basis elements of $S_n$  on $W_k^{\alpha}$ is
%

\begin{equation}\label{eq7.66}
\begin{array}{lllll}
d_{i}&*&(t^s\otimes (s+\alpha)\wedge v_1 \wedge  \dots \wedge v_{k-1})\\[3mm]
&=&(s_i+\alpha_i)t^{s}\otimes (s+\alpha) \wedge v_1\wedge \cdots \wedge v_{k-1},\\[3mm]
d_{ij}(r)&*&(t^s\otimes (s+\alpha)\wedge v_1 \wedge  \dots \wedge v_{k-1})\\[3mm]
&=&\Big(r_j(s_i+\alpha_i)-r_i(s_j+\alpha_j)\Big)t^{s+r}\otimes (s+\alpha+r) \wedge v_1\wedge \cdots \wedge v_{k-1}\\[3mm]
&+&\sum\limits_{p=1}^{k-1}(r_jv_{p,i}-r_iv_{p,j})t^{s+r}\otimes (s+\alpha) \wedge v_1\wedge \cdots \wedge v_{p-1}\wedge r \wedge v_{p+1} \wedge \cdots \wedge v_{k-1}.\\[3mm]
\end{array}
\end{equation}

In case of $\alpha \in \mathbb{Z}^n$, there is another submodule of
$F^{\alpha}(\omega_k)
$. It is
$\widetilde{W}_k^{\alpha}=W_k^{\alpha} \oplus t^{-\alpha} \otimes \wedge^kV.$

The following result gives the list of irreducible cuspidal $S_n$-modules.

\begin{thm} \label{Talboom} \cite{Talboom} Let $n\geq 2$ and $0 \leq k \leq n-1$.

(a) If $\lambda \neq 0, \omega_k$, then $F^{\alpha}(\lambda)$ is irreducible.

(b) $W_k^{\alpha}$ and $F^{\alpha}(\omega_k)/ \widetilde{W}_k^{\alpha}$ are irreducible and $F^{\alpha}(\omega_k)/ \widetilde{W}_k^{\alpha}\cong W_{k+1}^{\alpha}.$
\end{thm}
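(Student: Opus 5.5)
Theorem \ref{Talboom} is quoted from \cite{Talboom}. If we had to reprove it, we would argue directly with the explicit action \eqref{eq7.2}, via weight vectors. Every module in question is a weight module. Its weight spaces are $t^s\otimes V(\lambda)$ (respectively their intersections with $W_k^{\alpha}$), and $d_i$ acts on them by the scalar $s_i+\alpha_i$. Hence every nonzero submodule $N$ is graded and contains a nonzero vector $t^s\otimes v$. The aim is to show that, starting from any such vector, the elements $d_{ij}(r)$ generate the whole module (resp. the whole of $W_k^{\alpha}$).

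\textbf{Key step: polynomial dependence on $r$.} For fixed $s$, the action of $d_{ij}(r)$ followed by $d_{pq}(-r)$ is a polynomial function of $r\in\mathbb{Z}^n$. The same holds for suitable compositions $d_{ij}(r)d_{pq}(m-r)$, which land in a fixed weight space $t^{s+m}\otimes V(\lambda)$. Since these operators are polynomial in $r$ and $N$ is a subspace, a Vandermonde argument shows that $N$ contains each homogeneous coefficient, as an operator applied to $t^s\otimes v$. The top-degree coefficients are built from $E_{p,i}E_{q,j}$-type products acting on $V(\lambda)$, and the lower ones involve $E_{p,q}$ and scalars. From this we would deduce that $N\cap(t^{s+m}\otimes V(\lambda))$ contains $t^{s+m}\otimes U(\mathfrak{sl}_n)v$, up to the scalar terms involving $s+\alpha$. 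We then use irreducibility of $V(\lambda)$ to conclude that $N=F^{\alpha}(\lambda)$.

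\textbf{Main obstacle: degenerate cases.} The argument breaks down exactly when the quadratic operator $\sum r_p r_q E_{p,i}E_{q,j}$, restricted to the relevant pieces, vanishes. This happens precisely when $V(\lambda)$ is $\wedge^kV$, i.e.\ $\lambda=\omega_k$ or $0$, because then $r\wedge r=0$ kills the second-order terms. For part (a), with $\lambda\neq 0,\omega_k$, we would therefore need to exhibit a nonvanishing quadratic coefficient. We would try this with $r=e_p+e_q$ acting on a highest weight vector, comparing with the tensor structure $V\otimes V$.

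\textbf{Part (b).} We would use the explicit formula \eqref{eq7.66}. A nonzero vector $t^s\otimes(s+\alpha)\wedge w$ generates, through the $r$-linear terms $(s+\alpha)\wedge\cdots\wedge r\wedge\cdots$, all of $(s+\alpha+m)\wedge\wedge^{k-1}V$ in degree $s+m$. The one exception is the weight $s=-\alpha$, where $s+\alpha=0$; this needs separate treatment. For the isomorphism $F^{\alpha}(\omega_k)/\widetilde{W}_k^{\alpha}\cong W_{k+1}^{\alpha}$, the natural candidate is the map
$$t^s\otimes w\mapsto t^s\otimes(s+\alpha)\wedge w.$$
We would check that it is an $S_n$-homomorphism by comparing \eqref{eq7.55} for $k$ and $k+1$, and that its kernel is exactly $\widetilde{W}_k^{\alpha}$.
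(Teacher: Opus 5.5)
The paper gives no proof of this statement; it is quoted from Talboom. So your sketch has to stand on its own, and in both parts the decisive step is asserted rather than derived.

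\textbf{Part (a).} Write $\sigma=s+\alpha$, $u_{ij}(r)=r_je_i-r_ie_j$, $\rho$ for the $sl_n$-action, and $Q_{ij}(r)=\rho\big(r\,u_{ij}(r)^T\big)$. Then $d_{ij}(r)(t^s\otimes v)=t^{s+r}\otimes\big((u_{ij}(r)|\sigma)+Q_{ij}(r)\big)v$.

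Your sentence deducing $N\cap(t^{s+m}\otimes V(\lambda))\supseteq t^{s+m}\otimes U(sl_n)v$ ``up to the scalar terms'' is the whole proof, and nothing you describe produces it. The extracted coefficients are specific operators, not all of $U(sl_n)$. Moreover, an argument using only the coefficients of degree $\le 3$ in $r$ would apply verbatim to $N=W_k^{\alpha}\subset F^{\alpha}(\omega_k)$, where the degree-$4$ term vanishes, and would wrongly conclude $W_k^{\alpha}=F^{\alpha}(\omega_k)$.

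Your obstruction operator is also misidentified. $\sum r_pr_qE_{p,i}E_{q,j}=\rho(re_i^T)\rho(re_j^T)$ does not vanish on $\wedge^kV$; there it equals $r_i\rho(re_j^T)$. The vanishing needs the divergence-free constraint, so the relevant operator is the quartic top coefficient $Q_{pq}(r)Q_{ij}(r)$ of $d_{pq}(m-r)d_{ij}(r)$.

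The missing idea is that this top coefficient is independent of both $m$ and $s$.
\begin{itemize}
\item Hence $y=Q_{pq}(r)Q_{ij}(r)v$ satisfies $t^{s'}\otimes y\in N$ for every weight $s'$.
\item Subtracting the scalar part of $d_{ab}(r')(t^{s'}\otimes y)$, which is already in $N$, gives $t^{s'+r'}\otimes Q_{ab}(r')y\in N$ for all $s'$.
\item The $\rho(r'u^T)$ with $u\perp r'$ span $\rho(sl_n)$, so iterating gives $\mathcal{A}_n\otimes U(sl_n)y=F^{\alpha}(\lambda)\subseteq N$.
\end{itemize}
You must also ensure $y\neq 0$ for the given $v$. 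Nonvanishing on a highest weight vector is not enough. The span of $\{\rho(ru^T)\rho(ru'^T):u,u'\perp r\}$ is $\mathrm{Ad}(SL_n)$-stable, so its common kernel is a submodule. Hence that kernel is $0$ unless all these products vanish. They include every $\rho(E_{ij})^2$ with $i\neq j$, and these all vanish only for the minuscule weights $0,\omega_1,\dots,\omega_{n-1}$.

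\textbf{Part (b).} The claim that a nonzero $t^s\otimes\sigma\wedge w$ generates all of $(\sigma+m)\wedge\wedge^{k-1}V$ ``through the $r$-linear terms'' is not an argument either. Each $d_{ij}(m)$ sends one vector to one vector. On $W_k^{\alpha}$ the quartic term vanishes and there are no constant sections, so the mechanism of (a) is unavailable.

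You need a within-weight step. For example, the degree-$3$ coefficient of $d_{pq}(-r)d_{ij}(r)$ is $\rho(rc^T)$ with $c=(u_{ij}(r)|\sigma)u_{pq}(r)-(u_{pq}(r)|\sigma)u_{ij}(r)$, and $c\perp r,\sigma$. For $n\ge 3$ these span the traceless $X$ with $X\sigma=0$. Such $X$ act on $\sigma\wedge\wedge^{k-1}V\cong\wedge^{k-1}(V/\mathbb{C}\sigma)$ through $sl(V/\mathbb{C}\sigma)$, irreducibly, so $N_s$ is $0$ or everything. For $n=2$ the weight spaces are one-dimensional, so this step is trivial.

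Then propagate between weights using $d_{ij}(m)(t^s\otimes\sigma\wedge w)=t^{s+m}\otimes(\sigma+m)\wedge\big((u_{ij}(m)|\sigma)+\rho(m\,u_{ij}(m)^T)\big)w$.
\begin{itemize}
\item For $m$ not proportional to $\sigma$, some $(u_{ij}(m)|\sigma)\neq 0$. The bracketed operator is then a nonzero scalar plus a nilpotent, hence invertible, so a full $N_s$ maps onto the full weight space at $s+m$.
\item Proportional $m$ needs a two-step path.
\item The weight $-\alpha$ is harmless here, since that weight space of $W_k^{\alpha}$ is zero.
\end{itemize}

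Your use of $t^s\otimes w\mapsto t^s\otimes\sigma\wedge w$ for the isomorphism is right; it is the map $\psi_k$ the paper records after the statement. It reduces irreducibility of the quotient to that of $W_{k+1}^{\alpha}$. When $k=n-1$, the module $W_n^{\alpha}=\bigoplus_{s\neq-\alpha}t^s\otimes\wedge^nV$ needs the same easy check.
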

Clearly, all irreducible $S_n$-modules in Theorem \ref{Talboom} are cuspidal. In fact, for $0\leq k \leq n-1$, the linear map $\psi_k \ : \ F^{\alpha}(\omega_k) \rightarrow  F^{\alpha}(\omega_{k+1})$ deinfed by
\begin{equation}\label{eqOmega}
t^s\otimes v_1\wedge \cdots \wedge v_k \xrightarrow[]{\psi_k} t^s\otimes (s+\alpha)\wedge v_1 \wedge \cdots \wedge v_k
\end{equation} is a $S_n$-module homomorphism, where $Ker(\psi_k)=\widetilde{W}_k^{\alpha}$ and $Im (\psi_k)={W}_{k+1}^{\alpha}$.

In order to apply the same approach for both $W_{n-1}$ and $S_n$-modules in verifying the Balanced Equality we will unify the module structures in Theorems \ref{mainthm} and \ref{Talboom}.

For $\lambda \neq 0, \omega_k$ the module $F^{\alpha}(\lambda)$ can be associated with $T(U, \beta)$ in the following way:
$$T(U, \alpha)=t^{\alpha}F^{\alpha}(\lambda), \quad \mbox{assuming} \quad
U=V(\lambda).$$

%
Then the action \eqref{action1} of $W_n$ on $T(U, \beta)$ is agreed with the action \eqref{eq7.33} written as
\begin{equation}\label{eq7.44}
\begin{array}{lllllll}
{d_i}*(t^s\otimes v)&=&d_i(t^{s})\otimes v,\\[3mm]
d_{ij}(r)*(t^s\otimes v)&=& \Big(d_j(t^r)d_i(t^s)-d_i(t^r)d_j(t^s)\Big)\otimes v\\[3mm]
&+&t^{s-r}\sum\limits_{p=1, p\neq i}^{n}d_p(t^r)d_j(t^r)\otimes E_{p,i}v\\[3mm]
&-&t^{s-r}\sum\limits_{p=1, p\neq j}^{n}d_p(t^r)d_i(t^r)\otimes E_{p,j}v\\[3mm]
&+&t^{s-r}d_i(t^r)d_j(t^r)\otimes(E_{i,i}-E_{j,j})v,\\[3mm]
\end{array}
\end{equation}
where $s \in \alpha + \mathbb{Z}^{n}$ and $d_j(t^s)=s_jt^s$.

Similarly, with $t^{\alpha}F^{\alpha}(\omega_k)$ one can associate $\Omega^k(\alpha).$ Then due to
$$\begin{array}{llll}
t^{\alpha}\psi_k(t^{s}\otimes v_1\wedge \cdots \wedge v_k)
&=&t^{s+\alpha}\otimes (s+\alpha)\wedge v_1 \wedge \cdots \wedge v_k\\[3mm]
&=&
\sum\limits_{i=1}^{n}(s_i+\alpha_i)t^{{s+\alpha}}\otimes e_i\wedge v_1 \wedge \cdots \wedge v_k\\[3mm]
&=&\sum\limits_{i=1}^{n}d_i(t^{s+\alpha})\otimes e_i\wedge v_1 \wedge \cdots \wedge v_k,
\end{array}$$
one can conclude that the map $t^{\alpha}\psi_k$ acts on $t^{\alpha}F^{\alpha}(\omega_k)$ as de Rham differential $d$. Therefore, $S_n$-module $W_k^{\alpha}$ can be associated with  $d(\Omega^k(\alpha))$ assuming $d(\Omega^k(\alpha))=t^{\alpha}W_k^{\alpha},$ with $0\leq k \leq n-1.$

To summarize the above arguments, we present Theorem \ref{Talboom} in a form unified with Theorem \ref{mainthm} as follows:

\begin{thm} \label{Talboom1}
\

 Let $n\geq 2$ and $0 \leq s \leq n-1$. Then the following cuspidal $S_n$-modules:

(a) a module of tensor fields $T(U, \alpha)$, where $\alpha\in \mathbb{C}^{n}$ and $U$ is a finite-dimensional
simple $sl_{n}$-module different from the $k$-th exterior power of the natural $n$-dimensional $sl_{n}$-module;

(b) a module $d(\Omega^s(\alpha))
;$

(c) a trivial 1-dimensional $S_{n}$-module,

are irreducible.
\end{thm}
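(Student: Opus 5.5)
The plan is to deduce Theorem \ref{Talboom1} directly from Theorem \ref{Talboom}. The work is to show that each module in the list (a)--(c) is, up to twisting by $t^{\alpha}$, isomorphic as an $S_n$-module to one of the modules shown there to be irreducible. Cuspidality is immediate in every case. Each weight space is $t^s\otimes U$, or a subspace of $t^s\otimes\wedge^{k+1}V$, so its dimension is bounded by $\dim U$, respectively $\binom{n}{k+1}$.

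For (a), consider the map $\Phi: F^{\alpha}(\lambda)\to T(U,\alpha)$, $t^s\otimes v\mapsto t^{s+\alpha}\otimes v$, with $U=V(\lambda)$. I will check that $\Phi$ intertwines \eqref{eq7.33} with \eqref{eq7.44}, using the formal convention $d_j(t^{s+\alpha})=(s_j+\alpha_j)t^{s+\alpha}$.
\begin{itemize}
\item For the toral part: $d_i$ acts by $s_i+\alpha_i$ on both sides.
\item For $d_{ij}(r)$, the scalar term $r_j(s_i+\alpha_i)-r_i(s_j+\alpha_j)$ is exactly $d_j(t^r)d_i(t^{s+\alpha})-d_i(t^r)d_j(t^{s+\alpha})$, after dividing by $t^{r}t^{s+\alpha}$.
\item The $sl_n$-part matches term by term, since $t^{s-r}d_p(t^r)d_j(t^r)=r_pr_jt^{s+r}$.
\end{itemize}
Hence $T(U,\alpha)\cong F^{\alpha}(\lambda)$. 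Since $U=V(\lambda)$ is not an exterior power, $\lambda\neq 0,\omega_k$, and Theorem \ref{Talboom}(a) gives irreducibility.

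For (b), I will use the homomorphism $\psi_k$ of \eqref{eqOmega}. The same shift $\Phi$ identifies $F^{\alpha}(\omega_k)$ with $\Omega^k(\alpha)$, now with $gl_n$ replaced by its $sl_n$-restriction as in \eqref{eq7.55}. Under this identification, $t^{\alpha}\psi_k$ becomes the de Rham differential $d$, as computed just before the statement. Therefore $d(\Omega^k(\alpha))=t^{\alpha}\,\mathrm{Im}(\psi_k)=t^{\alpha}W_{k+1}^{\alpha}$, which is irreducible by Theorem \ref{Talboom}(b). For $k+1=n$ one either uses the isomorphism $F^{\alpha}(\omega_{n-1})/\widetilde W^{\alpha}_{n-1}\cong W_n^{\alpha}$, or notes that $W^\alpha_n$ is listed in (b) with index within range. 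The index $s=0$, i.e.\ $d(\Omega^0(\alpha))\cong W_1^{\alpha}$, is covered in the same way. I also need to check that $\psi_k$ is an $S_n$-homomorphism. This is a direct computation with \eqref{eq7.55}. The key identity is
$$(u|s+\alpha)(s+\alpha+r)=(u|s+\alpha)(s+\alpha)+(u|s+\alpha)r,$$
and the extra term is absorbed by the $p$-sum, since $(u|r)=0$. Part (c) is trivial: the trivial one-dimensional module has no proper nonzero subspaces.

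The main obstacle is bookkeeping rather than ideas. First, the formal extension $d_j(t^s)=s_jt^s$ for non-integral exponents must be justified as compatible with the module structure. It is simply notation for the scalar $s_j$, so the module axioms are inherited from $F^{\alpha}$. Second, the exceptional case $\alpha\in\mathbb{Z}^n$, where $\widetilde W_k^{\alpha}\neq W_k^{\alpha}$, must be handled. It does not affect the image of $\psi_k$, only its kernel, so irreducibility of $\mathrm{Im}(\psi_k)$ still follows from Theorem \ref{Talboom}(b).
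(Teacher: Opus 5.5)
Your proposal is correct and is essentially the paper's own argument: twist $F^{\alpha}(\lambda)$ by $t^{\alpha}$ so that \eqref{eq7.33} becomes \eqref{eq7.44}, identify $t^{\alpha}\psi_k$ with the de Rham differential, and read off irreducibility from Theorem \ref{Talboom}; your indexing $d(\Omega^k(\alpha))=t^{\alpha}\mathrm{Im}(\psi_k)=t^{\alpha}W_{k+1}^{\alpha}$ is in fact the consistent one, since the paper writes $t^{\alpha}W_k^{\alpha}$ even though $\mathrm{Im}(\psi_k)=W_{k+1}^{\alpha}$. One small slip in your check that $\psi_k$ is a homomorphism: the extra term $(u|s+\alpha)\,r\wedge v_1\wedge\cdots\wedge v_k$ is matched by the term of \eqref{eq7.55} coming from the wedge factor $s+\alpha$ itself, and the leftover terms $(u|v_p)\,r\wedge\cdots\wedge r\wedge\cdots$ vanish because $r$ appears twice, so $(u|r)=0$ plays no role there.
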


\begin{rem} \label{rem7.3} From Theorem \ref{Talboom1} we conclude that irreducible cuspidal $W_n$-modules are also irreducible cuspidal $S_n$-modules with the action \eqref{eq7.2}, which is adapted version for $S_n$ of the action \eqref{action1}.
\end{rem}

We have $\widetilde{\cal S}^n\cong InDer(\cal S^n)$ and from \cite{Kac} we get $InDer(S^n)\cong S_n$, where the isomorphism is given explicitly as
$$ad_{f_1\wedge \cdots \wedge f_{n-1}} \rightarrow \sum\limits_{k=1}^{n}(-1)^{n+k}Jac^k_{f_1, \dots, f_{n-1}}d_k.$$

\

\section{Simplification of the Balanced Equality for $W_{n-1}$ and $S_n$-modules.}\label{Sec6}

\

In this section we simplify the Balanced Equality for irreducible cuspidal $W_{n-1}$ and $S_n$-modules described in Section \ref{Sec5}.

\

\subsection{Simplification of the Balanced Equality for $W_{n-1}$-modules.}

\

\

To achieve our goal, we will use the following auxiliary lemmas and relations.

\begin{lem} For any $x_i, y_i\in \cal A_{\mathbb{C}}$ we have
\begin{equation}\label{eq5.1}
\sum\limits_{p=1}^{n}(-1)^{n+p}\Big(\{y_1,\dots,\widehat{y_p},\dots,y_n\}\cdot Wr^k_{x_1,\dots,x_{n-2},y_p}+
\{x_1,\dots,x_{n-2},y_p\}\cdot Wr^k_{y_1,\dots,\widehat{y_p},\dots,y_n}\Big)=0.
\end{equation}
\end{lem}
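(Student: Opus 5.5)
The plan is to treat \eqref{eq5.1} as a purely linear-algebraic identity of Plücker type. Both factors in each summand are $(n-1)\times(n-1)$ minors of the column-by-row matrix whose column for a function $g$ is $(g, d_1g, \dots, d_{n-1}g)^T$ (extended to $d_n g$ if the bracket $\{\cdot\}$ uses $d_n$). Fix $k$. Let $A$ be the set of rows used by the bracket $\{\cdot,\dots,\cdot\}$, and let $B$ be the row set of $Wr^k$ (all rows of the Wronskian except the $k$-th, with the last column removed, i.e.\ $n-1$ rows). For a row set $R$ of size $n-1$ write $\det_R(g_1,\dots,g_{n-1})$ for the corresponding minor. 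The left side of \eqref{eq5.1} is then
\[
\sum_{p=1}^n(-1)^{n+p}\Big(\det_A(\widehat{y_p})\det_B(x,y_p)+\det_B(\widehat{y_p})\det_A(x,y_p)\Big),
\]
where $x=(x_1,\dots,x_{n-2})$ and $\widehat{y_p}$ denotes $y_1,\dots,\widehat{y_p},\dots,y_n$. From this point the functions $x_i,y_j$ play no role beyond their column vectors, so nothing about $d_i$ being derivations is used.

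The key step is the single-row-set case. For one row set $R$, the $n$ column vectors $y_1|_R,\dots,y_n|_R$ live in $\mathbb{C}^{n-1}$. Expanding an $n\times n$ determinant with a repeated row (Cramer's rule) gives
\[
\sum_p(-1)^p\det_R(\widehat{y_p})\,y_p|_R=0.
\]
Applying the linear functional $\det_R(x,\cdot)$ yields
\[
\sum_p(-1)^p\det_R(\widehat{y_p})\det_R(x,y_p)=0.
\]
To get the mixed expression, I would polarize. Consider the row set $R_\varepsilon$ obtained by replacing the rows of $A\setminus B$ by $(\text{row of }A)+\varepsilon(\text{row of }B)$. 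This is possible when $A$ and $B$ differ in exactly one row, which I expect, since both are $(n-1)$-row subsets of the same $n$ rows. Then $\det_{R_\varepsilon}=\det_A+\varepsilon\det_B$ is linear in $\varepsilon$, and the single-row-set identity holds for every $\varepsilon$. Taking the coefficient of $\varepsilon$ gives exactly the symmetric sum $\det_A\det_B+\det_B\det_A$, which is \eqref{eq5.1}. The coefficients of $\varepsilon^0$ and $\varepsilon^2$ give the two pure identities, which hold as well.

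The main obstacle is bookkeeping. One must pin down precisely which rows (and which sign conventions, e.g.\ the factor $(-1)^{n+1-k}$ and the position of the differing row) define $\{\cdot\}$ and $Wr^k$, so that they differ in a single row and the minors are taken with consistent column orders. If instead they differ in more than one row, the $\varepsilon$-trick must be replaced by a direct argument. In that case I would expand the $(2n-2)\times(2n-2)$ block determinant
\[
\begin{pmatrix} X_A & Y_A\\ X_B & Y_B\end{pmatrix}
\]
with duplicated $y$-columns via generalized Laplace expansion, and show that it vanishes and that its expansion reproduces the sum. I would first check the identity for $n=3$ by hand to fix the signs.
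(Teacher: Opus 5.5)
Your argument is correct, but it follows a different route from the paper's. The paper evaluates the two halves of the sum separately. By Laplace's theorem it writes $\sum_p(-1)^{p+1}\{y_1,\dots,\widehat{y_p},\dots,y_n\}\,Wr^k_{x_1,\dots,x_{n-2},y_p}$ as a block determinant $\left|\begin{smallmatrix}A&B\\0&C\end{smallmatrix}\right|$, with top rows those of $Wr^k$ and bottom rows those of the Jacobian. It then subtracts bottom rows to kill every row of $B$ except the function row, and expands again to obtain $\pm Jac^k(x_1,\dots,x_{n-2})\,[y_1,\dots,y_n]$. The other half is handled the same way, and the two closed forms have opposite signs.

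Your polarization never evaluates either half. The point you hedge on is automatic. With the convention forced by expanding $[f_1,\dots,f_{n-1},t^s]$ along its last column, $A=\{d_1,\dots,d_{n-1}\}$ and $B=\{\mathrm{id},d_1,\dots,\widehat{d_k},\dots,d_{n-1}\}$ are distinct $(n-1)$-subsets of the same $n$ rows, so they differ in exactly one row. Replacing the $d_k$-row by $d_k+\varepsilon\,\mathrm{id}$ gives $\det_{R_\varepsilon}=\det_A+(-1)^{k-1}\varepsilon\det_B$. This sign is the same for every minor, so it only rescales the $\varepsilon$-coefficient by $\pm1$. Take $\varepsilon$ to be an indeterminate so that comparing coefficients over $\mathcal{A}$ is legitimate.

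Your approach gives a short, essentially sign-free proof that uses nothing beyond multilinearity of determinants. The sign comparison it avoids is exactly where the paper's write-up is thinnest. The paper's approach gives the explicit value $\pm Jac^k(x_1,\dots,x_{n-2})\,[y_1,\dots,y_n]$ of each half. It also sets up the block-determinant manipulations the paper reuses in the verification for $\dim U=1$.
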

\begin{proof} Applying Laplace's theorem we deduce
$$\sum\limits_{p=1}^{n}(-1)^{p+1}\{y_1,\dots,\widehat{y_p},\dots,y_n\}\cdot Wr^k_{x_1,\dots,x_{n-2},y_p}=
\left|\begin{array}{cccccccccc}
A & B\\
0&C\\
\end{array}\right|$$
where $A, B$ and $C$ are matrices as in $Wr^k_{x_1, \dots, x_{n-2}}, Wr^k_{y_1, \dots, y_{n}}$ and $\{y_1, \dots, y_n\},$ respectively.

Now using the properties of determinants, the upper right corner block matrix $B$ can be transformed to the following form: the first row remains unchanged, while the rest rows are equal to zero. Then again apply Laplace's theorem we get

$$\left|\begin{array}{cccccccccc}
A & B\\
0&C\\
\end{array}\right|=(-1)^{n}Jac^q(x_1, \dots, x_{n-2})[y_1, \dots, y_{n}],$$

where $Jac^q(x_1, \dots, x_{n-2})$ is the determinant which obtained from $\{x_1, \dots, x_{n-1}\}$  (see  Example \ref{exam1.2}) by deleting its $q$-th row and the last column.

In a similar way, we derive
$$\sum\limits_{p=1}^{n}(-1)^{p+1}\{x_1,\dots,x_{n-2},y_p\}\cdot Wr^k_{y_1,\dots,\widehat{y_p},\dots,y_n}=
\left|\begin{array}{cccccccccc}
A'&B'\\
0& C'\\
\end{array}\right|$$
where $A', B'$ and $C'$ are matrices as in $\{x_1, \dots, x_{n-2}\}, Wr^{n-1}_{y_1, \dots, y_{n}}$ and $Wr^k_{y_1, \dots, y_n},$ respectively.
By virtue of
$$\left|\begin{array}{cccccccccc}
A'&B'\\
0& C'\\
\end{array}\right|=(-1)^{n-1}Jac^q(x_1, \dots, x_{n-2})[y_1, \dots, y_{n}]$$
we complete the proof of lemma. \end{proof}

\begin{lem} For any $x_i, y_i\in \cal A_{\mathbb{C}}$ we have
\begin{equation}\label{eq5.3}
(-1)^{n}\{x_1, \dots, x_{n-2}, [y_1, \dots, y_{n}]\}=
\sum\limits_{p=1}^{n}(-1)^{p}[y_1, \dots,\hat{y}_p,\dots, y_{n},\{x_1, \dots, x_{n-2},y_p\}]
\end{equation}
\end{lem}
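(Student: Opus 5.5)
The plan is to read both sides of \eqref{eq5.3} through the derivation determined by $x_1,\dots,x_{n-2}$. Here $\mathcal A_{\mathbb C}$ is the Laurent polynomial ring in $t_1,\dots,t_{n-1}$, so $\{x_1,\dots,x_{n-2},\cdot\}$ is an $(n-1)$-fold Jacobian in $d_1,\dots,d_{n-1}$. We expand it along its last column and set
$$D:=\{x_1,\dots,x_{n-2},\,\cdot\,\}=\sum_{k=1}^{n-1}c_k d_k,$$
where each $c_k$ is, up to the sign $(-1)^{n-1+k}$, the minor $Jac^k(x_1,\dots,x_{n-2})$. Then $D$ is a derivation of $\mathcal A_{\mathbb C}$, and the left-hand side is $(-1)^nD\big([y_1,\dots,y_n]\big)$. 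On the right-hand side, we move the last argument $D(y_p)$ of $[y_1,\dots,\hat y_p,\dots,y_n,D(y_p)]$ back to position $p$. By skew-symmetry of the Wronskian this costs $(-1)^{n-p}$, so the right-hand side becomes
$$(-1)^n\sum_{p=1}^n [y_1,\dots,D(y_p),\dots,y_n].$$
Thus \eqref{eq5.3} is equivalent to
$$D\big([y_1,\dots,y_n]\big)=\sum_{p=1}^n [y_1,\dots,D(y_p),\dots,y_n].$$

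To compare the two sides, we apply $D$ entrywise to the determinant \eqref{wronskian}. The rows of the Wronskian are $y_p$ and $d_i y_p$ for $1\le i\le n-1$. Since $D$ is a derivation, $D(\mathrm{Wr})$ is the sum over $p$ of the determinants whose $p$-th column is replaced by $(Dy_p, Dd_1y_p,\dots,Dd_{n-1}y_p)^T$. The column of $[y_1,\dots,D(y_p),\dots,y_n]$ is instead $(Dy_p, d_1Dy_p,\dots,d_{n-1}Dy_p)^T$. The two differ only through the commutators
$$[d_i,D]=\sum_k d_i(c_k)\,d_k.$$
Collecting these terms, the difference between the right-hand side and $D(\mathrm{Wr})$ is
$$\sum_{i=1}^{n-1}\sum_k d_i(c_k)\cdot\bigl(\text{Wronskian with row } i \text{ replaced by the row } d_k y_1,\dots,d_k y_n\bigr).$$
For $k\neq i$ the row $d_k$ already occurs elsewhere in the determinant, so the term vanishes. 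For $k=i$ we simply recover $\mathrm{Wr}$. Hence the difference equals $\big(\sum_i d_i(c_i)\big)\,[y_1,\dots,y_n]$.

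It therefore remains to show that the Jacobian vector field is divergence-free, i.e. $\sum_{k} d_k(c_k)=0$. This is the step I expect to need the most care, mainly for the signs of the minors. The argument is the classical one: $\sum_k d_k(c_k)$ is the Laplace expansion along the last column of the determinant whose rows are $d_k$ applied to $(x_1,\dots,x_{n-2})$ and whose last column is formally $d_k$. Expanding $d_k$ of each minor by the Leibniz rule produces terms containing second derivatives $d_kd_j x_l$. Because the $d_k=t_k\partial/\partial t_k$ pairwise commute, these terms cancel in pairs, as for the usual identity $\sum_k\partial_k(\text{cofactor})=0$. With this, $D(\mathrm{Wr})$ equals the column-wise sum, and \eqref{eq5.3} follows with the stated sign $(-1)^n$.
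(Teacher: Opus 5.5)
Your proof is correct, but it takes a different route from the paper's. The paper does no determinant calculus. It observes that a Jacobian is a Wronskian with a constant inserted, $[f_1,\dots,f_{n-2},1,g]=(-1)^n\{f_1,\dots,f_{n-2},g\}$, so the left-hand side equals $[x_1,\dots,x_{n-2},1,[y_1,\dots,y_n]]$. It then applies the fundamental identity \eqref{identity1} of $W^n$ with $x_{n-1}=1$, and converts each $[x_1,\dots,x_{n-2},1,y_p]$ back into $(-1)^n\{x_1,\dots,x_{n-2},y_p\}$, moving it to the last slot.

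So the statement you reduce to, that $D=\{x_1,\dots,x_{n-2},\cdot\}$ is a derivation of the Wronskian bracket, is (up to the factor $(-1)^n$) exactly the fact that $\mathrm{ad}_{x_1\wedge\dots\wedge x_{n-2}\wedge 1}$ is a derivation of $W^n$. The paper gets this for free from the $n$-Lie structure. You prove it from scratch: your commutator computation yields, for every $D=\sum_k c_k d_k$, the general formula $\sum_p[y_1,\dots,Dy_p,\dots,y_n]=D[y_1,\dots,y_n]+\mathrm{div}(D)\,[y_1,\dots,y_n]$. The divergence-freeness of Jacobian fields, $\sum_q(-1)^q d_q(Jac^q(x_1,\dots,x_{n-2}))=0$, then finishes the argument; the paper itself uses this fact later, in the proof of Proposition~\ref{prop6.1}.

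The one step you gloss, trading replaced columns for replaced rows, is legitimate, and you should say why. Set $N_{i,p}=\sum_k d_i(c_k)\,d_k y_p$ and let $C_{i,p}$ be the cofactors of the Wronskian matrix. Both the column-sum and the row-sum then equal $\sum_{i,p}N_{i,p}C_{i,p}$ by cofactor expansion.

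The paper's route is a two-line corollary of a known but nontrivial theorem, the fundamental identity for $W^n$. Yours is self-contained and does not assume that identity. It also shows more: a vector field $\sum_k c_k d_k$ acts as a derivation of the Wronskian bracket precisely when it is divergence-free.
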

\begin{proof} The proof of lemma follows from the following chain of equalities:
$$\begin{array}{llll}
(-1)^{n}\{x_1, \dots, x_{n-2}, [y_1, \dots, y_{n}]\}&=&(-1)^{n}(-1)^{n+1}[x_1, \dots, x_{n-2}, [y_1, \dots, y_{n}],1]&=\\[3mm]
\sum\limits_{k=1}^{n}[[y_1, \dots,[x_1, \dots, x_{n-2},1,y_p],\dots, y_{n}]&=&
\sum\limits_{p=1}^{n}(-1)^{n-p+1}[y_1, \dots,\hat{y}_p,\dots, y_{n},[x_1, \dots, x_{n-2},y_p,1]]&=\\[3mm]
&&\sum\limits_{p=1}^{n}(-1)^{p}[y_1, \dots,\hat{y}_p,\dots, y_{n},\{x_1, \dots, x_{n-2},y_p\}].&
\end{array}
$$\end{proof}

Taking into account the equality
$$[f_1,\dots,f_{n-1},t^s]=\sum\limits_{k=1}^{n-1}(-1)^{n+1-k}Wr^k_{f_1, \dots, f_{n-1}}d_k(t^{s})+(-1)^{n+1}t^s\{f_1, \dots, f_{n-1}\},$$
we conclude
$$\begin{array}{llll}
ad_{f_1\wedge\dots \wedge f_{n-1}}(t^s\otimes u) &=&
\Big([{f_1,\dots,  f_{n-1},}t^{s}]
+(-1)^{n}t^s\{f_1, \dots, f_{n-1}\}\Big)\otimes u\\[3mm]
&+& t^s\sum\limits_{i,k=1}^{n-1}(-1)^{n+1-k}d_i(Wr^k_{f_1, \dots, f_{n-1}})\otimes E_{i,k}u.\\[3mm]
\end{array}$$

Therefore, the Balanced Equality
with $m=t^s\otimes u$ can be written as follows:

\begin{equation}\label{eqbig}
\begin{array}{ccccccc}
[x_1, \dots, x_{n-2}, [y_1, \dots, y_{n}],t^s]\otimes u &+&\\[3mm]
(-1)^{n}t^s\{x_1, \dots, x_{n-2}, [y_1, \dots, y_{n}]\}\otimes u&+&\\[3mm]
t^s\sum\limits_{i,k=1}^{n-1}(-1)^{n+1-k}d_i(Wr^k_{x_1, \dots, x_{n-2}, [y_1, \dots, y_{n}]}) \otimes E_{i,k}u&=&\\[3mm]
\sum\limits_{p=1}^{n}\Bigg((-1)^{n+p}[y_1, \dots, \widehat{y}_{p}, \dots, y_n,[x_1, \dots, x_{n-2}, y_p, t^s]]\otimes u&+&\\[3mm]
(-1)^{p}\underbrace{[x_1, \dots, x_{n-2}, y_p, t^s]\{y_1, \dots, \widehat{y}_{p}, \dots, y_n\}}_{1}\otimes u&+&\\[3mm]
\underbrace{[x_1, \dots, x_{n-2}, y_p,t^s] \cdot \sum\limits_{i,k=1}^{n-1}(-1)^{p+1-k}d_i(Wr^k_{y_1, \dots, \widehat{y}_p, \dots, y_n})}_2\otimes E_{i,k}u&+&\\[3mm]
(-1)^{p}\Big(\underbrace{[y_1, \dots, \widehat{y}_{p}, \dots, y_n,t^s\{x_1, \dots, x_{n-2}, y_p\}]}_3\otimes u&+&\\[3mm]
(-1)^{n}t^s\{x_1, \dots, x_{n-2}, y_p\}\{y_1, \dots, \widehat{y}_{p}, \dots, y_n\}\otimes u&+&\\[3mm]
t^s\{x_1, \dots, x_{n-2}, y_p\}\cdot
\sum\limits_{i,k=1}^{n-1}(-1)^{n+1-k}d_i(Wr^k_{y_1, \dots, \widehat{y}_p, \dots, y_n})\otimes E_{i,k}u\Big)&+&\\[3mm]
{\sum\limits_{i,k=1}^{n-1}(-1)^{p+1-k}\Big(
\underbrace{[y_1, \dots, \widehat{y}_{p}, \dots, y_n,t^sd_i(Wr^k_{x_1, \dots, x_{n-2}, y_p})]}_4\otimes E_{i,k}u}&+&\\[3mm]
(-1)^{n}t^sd_i(Wr^k_{x_1, \dots, x_{n-2}, y_p})\{y_1, \dots, \widehat{y}_{p}, \dots, y_n\}\otimes E_{i,k}u&+&\\[3mm]
{t^sd_i(Wr^k_{x_1, \dots, x_{n-2}, y_p})\cdot \sum\limits_{j,q=1}^{n-1}(-1)^{n+1-j}d_q(Wr^j_{y_1, \dots, \widehat{y}_p, \dots, y_n})\otimes E_{q,j}E_{i,k}u}
\Big)\Bigg)\\[3mm]
\end{array}
\end{equation}

From $n$-Lie identity we have
\begin{equation}\label{eq5.5}
[x_1, \dots, x_{n-2}, [y_1, \dots, y_{n}],t^s]=
\sum_{p=1}^{n}(-1)^{n+p}[y_1, \dots, \widehat{y}_{p}, \dots, y_n,[x_1, \dots, x_{n-2}, y_p, t^s]].
\end{equation}

Expanding the determinants along the last column and utilizing \eqref{Wronskianproperty}, we obtain
$$
\begin{array}{llllll}
1=\Big(\sum\limits_{k=1}^{n-1}(-1)^{n+1-k}Wr^k_{x_1, \dots, x_{n-2},y_p}d_k(t^{s})+(-1)^{n+1}t^s\{x_1, \dots, x_{n-2}, y_p\}\Big)\cdot\{y_1, \dots, \widehat{y}_{p}, \dots, y_n\}, \\[1mm]
2=
\Big(\sum\limits_{q=1}^{n-1}(-1)^{n-q}Wr^q_{x_1, \dots, x_{n-2},y_p}d_q(t^{s})+(-1)^{n}t^s\{x_1, \dots, x_{n-2}, y_p\}\Big)\cdot \sum\limits_{i,k=1}^{n-1}(-1)^{p-k}d_i(Wr^k_{y_1, \dots, \widehat{y}_p, \dots, y_n}), \\[1mm]
3=
t^s[y_1, \dots, \widehat{y}_{p}, \dots, y_n,\{x_1, \dots, x_{n-2}, y_p\}]+
\{x_1, \dots, x_{n-2}, y_p\}\cdot
\sum\limits_{k=1}^{n-1}(-1)^{n+1-k}Wr^k_{y_1, \dots, \widehat{y}_{p}, \dots, y_n}d_k(t^{s}),\\[1mm]
4=
%
%
t^s[y_1, \dots, \widehat{y}_{p}, \dots, y_n,d_i(Wr^k_{x_1, \dots, x_{n-2}, y_p})]+d_i(Wr^k_{x_1, \dots, x_{n-2}, y_p})\cdot
\sum\limits_{q=1}^{n-1}(-1)^{n+1-q}Wr^q_{y_1, \dots, \widehat{y}_{p}, \dots, y_n}d_q(t^{s}).\\[1mm]
\end{array}
$$

Substituting the above expressions into the Balanced Equality and taking into account the equalities \eqref{eq5.1}, \eqref{eq5.3}, \eqref{eq5.5}, we group similar terms and, after dividing both sides of the Balanced Equality by $t^s$, we obtain

$$\begin{array}{cccc}
\sum\limits_{i,k=1}^{n-1}(-1)^{n+1-k}d_i(Wr^k_{x_1, \dots, x_{n-2}, [y_1, \dots, y_{n}]}) \otimes E_{i,k}u&=&\\[5mm]
\sum\limits_{p=1}^{n}\Bigg(\sum\limits_{q=1}^{n-1}(-1)^{n+1-q}{s_q}Wr^q_{x_1, \dots, x_{n-2},y_p}\cdot \sum\limits_{i,k=1}^{n-1}(-1)^{p+1-k}d_i(Wr^k_{y_1, \dots, \widehat{y}_p, \dots, y_n})\otimes E_{i,k}u&+&\\[5mm]
\sum\limits_{i,k=1}^{n-1}(-1)^{p+1-k}\Bigg(\Big([y_1, \dots, \widehat{y}_{p}, \dots, y_n,d_i(Wr^k_{x_1, \dots, x_{n-2}, y_p})]&+&\\[5mm]
d_i(Wr^k_{x_1, \dots, x_{n-2}, y_p})\cdot
\sum\limits_{q=1}^{n-1}(-1)^{n+1-q}{s_q}Wr^q_{y_1, \dots, \widehat{y}_{p}, \dots, y_n}\Big)\otimes E_{i,k}u&+&\\[5mm]
(-1)^{n}d_i(Wr^k_{x_1, \dots, x_{n-2}, y_p})\{y_1, \dots, \widehat{y}_{p}, \dots, y_n\}\otimes E_{i,k}u&+&\\[5mm]
d_i(Wr^k_{x_1, \dots, x_{n-2}, y_p})\cdot \sum\limits_{j,q=1}^{n-1}(-1)^{n+1-j}d_q(Wr^j_{y_1, \dots, \widehat{y}_p, \dots, y_n})\otimes E_{q,j}E_{i,k}u
\Bigg)\Bigg).\\[5mm]
\end{array}
$$

Due to arbitrariness of $s$ (i.e., all coordinates of $s$ are arbitrary) the above equality is reduced into the following two equalities:

\begin{equation}\label{eq5.6}
\begin{array}{ccccc}
\sum\limits_{i,k=1}^{n-1}(-1)^{k+1}\Bigg( \sum\limits_{p=1}^{n}(-1)^{p}\Big(Wr^q_{x_1, \dots, x_{n-2},y_p}d_i(Wr^k_{y_1, \dots, \widehat{y}_p, \dots, y_n})&+&\\[3mm]
d_i(Wr^k_{x_1, \dots, x_{n-2}, y_p})Wr^q_{y_1, \dots, \widehat{y}_{p}, \dots, y_n}\Big)\Bigg)\otimes E_{i,k}u&=&0,\\[3mm]
\end{array}
\end{equation}
for any $1\leq q \leq n-1$.

\begin{equation}\label{eq5.7}
\begin{array}{ccccccc}
\sum\limits_{i,k=1}^{n-1}(-1)^{n+1-k}d_i(Wr^k_{x_1, \dots, x_{n-2}, [y_1, \dots, y_{n}]}) \otimes E_{i,k}u&+&\\[5mm]
\sum\limits_{p=1}^{n}\sum\limits_{i,k=1}^{n-1}(-1)^{p-k}\Bigg(\Big([y_1, \dots, \widehat{y}_{p}, \dots, y_n,d_i(Wr^k_{x_1, \dots, x_{n-2}, y_p})]&+&\\[5mm]
(-1)^{n+1}d_i(Wr^k_{x_1, \dots, x_{n-2}, y_p})\{y_1, \dots, \widehat{y}_{p}, \dots, y_n\}\otimes E_{i,k}u&+&\\[5mm]
d_i(Wr^k_{x_1, \dots, x_{n-2}, y_p})\cdot \sum\limits_{j,q=1}^{n-1}(-1)^{n-j}d_q(Wr^j_{y_1, \dots, \widehat{y}_p, \dots, y_n})\otimes E_{q,j}E_{i,k}u\Bigg)&=&0.\\[5mm]
\end{array}\end{equation}

\

\subsection{Simplification of the Balanced Equality for $S_{n}$-modules.}

\

\

Since
$\{f_1, \dots, f_{n}\}=Jac_{f_1, \dots, f_{n}}=Wr_{1, f_1, \dots, f_{n}}=[1, f_1, \dots, f_{n}],$ we obtain the embedding $S^n\subset W^{n+1}$.

By virtue of the following equality
$$\sum\limits_{i=1}^{n}(-1)^{n+k}Jac^k_{f_1, \dots, f_{n-1}}d_k=
\sum_{k=1}^{n}(-1)^{n+k}Wr^k_{1, f_1, \dots, f_{n-1}}d_k$$
and Remark \ref{rem7.3} from Equalities \eqref{eq5.6} and \eqref{eq5.7} we
easily reduce the Balances Equality \eqref{eq3.7} for $S_{n}$-modules to the following ones:

\begin{equation}\label{eq7.88}
\begin{array}{lllll}
\sum\limits_{p, i, k =1}^{n} (-1)^{k+p}\Bigg(Jac^q_{x_1, \dots, x_{n-2}, y_p}d_i(Jac^k_{y_1, \dots, \widehat{y}_p, \dots, y_n})&+&\\[5mm]
(Jac^k_{x_1, \dots, x_{n-2}, y_p})
Jac^q_{y_1, \dots, \widehat{y}_p, \dots, y_n}\Bigg)\otimes E_{i,k}u&=&0,\\[5mm]
\end{array}
\end{equation}
for any $1\leq q \leq n$.

\begin{equation}\label{eq7.99} \begin{array}{llllll}
\sum\limits_{i,k=1}^{n}(-1)^{n+k}
d_i(Jac^k_{x_1, \dots, x_{n-2}, \{y_1, \dots, y_{n}\}})\otimes E_{i,k}u&=&\\[3mm]
\sum\limits_{i,k=1}^{n}(-1)^{k}\sum\limits_{p=1}^{n}(-1)^{p}\Bigg(
\{y_1, \dots, \widehat{y}_p, \dots, y_n,d_i(Jac^k_{x_1, \dots, x_{n-2}, y_p})\}\otimes E_{i,k}u&+&\\[5mm]
d_i(Jac^k_{x_1, \dots, x_{n-2}, y_p})\sum\limits_{j,q=1}^{n}(-1)^{n+j}
d_q(Jac^j_{y_1, \dots, \widehat{y}_p, \dots, y_n})\otimes E_{q,j}E_{i,k}u\Bigg).\\[5mm]
\end{array}
\end{equation}

\

\section{Irreducible cuspidal modules over $n$-Lie algebra $W^{n}$.}

\

Thanks to results of Section \ref{Sec6} and Proposition \ref{prop3.3} in this section we are going to examine cuspidal $W_{n-1}$-modules listed in Theorem \ref{mainthm} that satisfy Equalities \eqref{eq5.6} - \eqref{eq5.7}.

\

\subsection{The case of $T(U, \alpha)$ with $dim U=1$.}\label{subsection 7.1}

\

\

It is known that $gl_{n-1}=sl_{n-1}\oplus \mathbb{C} E$, where $E$ is unitary matrix and any $gl_{n-1}$-module $U$ is an extension of $sl_{n-1}$-module $V$ with $E*V=b\cdot id_V$. Consider the standard basis (Chevalley's basis) of $sl_{n-1}$:
$$h_k=E_{k,k}-E_{n-1,n-1}, \quad 1\leq k \leq n-2, \quad E_{i,j}, \quad 1\leq i\neq j \leq n-1.$$

Let us the first consider the case of $U=Span\{u_0\}$. In this case, it is a trivial module over $sl_{n-1}$. Consequently, we have
\begin{equation}\label{eq6.1}
E_{i,k}u_0=0, \quad E_{k,k}u_0=\frac{b}{n-1}u_0, \quad E_{q,j}E_{i,k}u_0=\delta_{i,k}\delta_{q,j}(\frac{b}{n-1})^2u_0, \ 1\le i,j,q,k\le n-1.
\end{equation}

Denote by
$$W^q(x), \quad W^q(y), \quad J^q(x), \quad W(y), \quad J^q(y), \quad J(y), \quad J(x), \quad W^q(y)$$
the matrices (not necessarily  square) as in
$$Wr^q_{x_1, \dots, x_{n-2}}, \ Wr^q_{y_1, \dots, y_{n}}, \ Jac^q(x_1, \dots, x_{n-2}), \ Wr_{y_1, \dots, y_n},$$
$$ \ Jac^q(y_1, \dots, y_n), \ Jac(y_1, \dots, y_n), \ Jac(x_1, \dots, x_{n-2}), \ Wr^q_{y_1, \dots, y_n},$$
respectively.

We set also by $J^{q=0}(x), \ J^{q=0}(y)$ the  matrices corresponding to $Jac_0^q(x_1, \dots, x_{n-2}), \ Jac_0^q(y_1, \dots, y_n)$ with the $q$-th row set to zero, respectively and by $J^{q,0}(x), \ J^{q,0}(y)$ are matrices corresponding to $Jac^{q}(x_1, \dots, x_{n-2}), \ Jac^{q}(y_1, \dots, y_n)$ with the first row set to zero, respectively.

\begin{prop} \label{prop6.1} The space $$T(U, \alpha)=t^{\alpha}\mathbb{C}[t_1^{\pm 1}, \dots, t_{n-1}^{\pm 1}]\otimes U \quad \mbox{with} \quad dim U=1$$
is simple cuspidal module over $W^n$.
\end{prop}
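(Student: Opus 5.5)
By Proposition \ref{prop3.3} and the remark after it, it suffices to show two things. First, $T(U,\alpha)$ with $\dim U=1$ is an irreducible cuspidal module over $W_{n-1}\cong InDer(W^n)\cong\widetilde{W^n}$. Second, it satisfies the Balanced Equality \eqref{eq3.7}. Irreducibility and cuspidality over $W_{n-1}$ come from Theorem \ref{mainthm}. A one-dimensional $U=\mathrm{Span}\{u_0\}$ with $E*u_0=bu_0$ is simple, and cuspidality is clear because every weight space $t^s\otimes U$ is one-dimensional. We should only note that $U$ must not be one of the excluded exterior powers $\wedge^0V$ or $\wedge^{n-1}V$. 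If it is, then $T(U,\alpha)$ is $\Omega^0(\alpha)$ or $\Omega^{n-1}(\alpha)$. In the non-exceptional case $\Omega^0(\alpha)$ (with $\alpha\notin\mathbb{Z}^{n-1}$) and $\Omega^{n-1}(\alpha)=d(\Omega^{n-2}(\alpha))$ are still simple, and for $\alpha\in\mathbb Z^{n-1}$ the appropriate simple subquotient is taken. This leaves only the Balanced Equality. By Section \ref{Sec6}, that reduces to verifying \eqref{eq5.6} and \eqref{eq5.7} for $u=u_0$.

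We substitute \eqref{eq6.1} into both identities. All terms with $i\neq k$ vanish. In the quadratic term $E_{q,j}E_{i,k}u_0$, only $i=k$, $q=j$ survive, each with coefficient $(b/(n-1))^2$. Set $c=b/(n-1)$. Then \eqref{eq5.6} becomes $c$ times
\[
\sum_{k=1}^{n-1}(-1)^{k+1}\sum_{p=1}^{n}(-1)^p\Big(Wr^q_{x,y_p}\,d_k(Wr^k_{\hat y_p})+d_k(Wr^k_{x,y_p})\,Wr^q_{\hat y_p}\Big)=0 .
\]
Here $\sum_k(-1)^{k+1}d_k(Wr^k_{f})$ is, up to sign, the divergence of the vector field $\psi(ad_f)$. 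The proof of \eqref{eq5.1} shows that such sums of products collapse to block determinants. So we would expand the divergence by the Leibniz rule and recognise the result as the derivative of a Laplace expansion that vanishes by the same block-determinant argument as in \eqref{eq5.1}. Equivalently, the sum over $p$ is an alternating sum of products of minors of an $n\times(2n-2)$ matrix and is zero by Laplace/Plücker relations.

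For \eqref{eq5.7}, write $\mathrm{div}(f)=\sum_k(-1)^{n+1-k}d_k(Wr^k_f)$. The equality then reads
\[
c\,\mathrm{div}(x,[y])-c\sum_p(-1)^{p+n}\Big([\hat y_p,\mathrm{div}(x,y_p)]-(-1)^{n}\mathrm{div}(x,y_p)\{\hat y_p\}\Big)+c^2\sum_p(-1)^{p+n}\mathrm{div}(x,y_p)\,\mathrm{div}(\hat y_p)=0 .
\]
Here $(x,[y])$ denotes $x_1\wedge\dots\wedge x_{n-2}\wedge[y_1,\dots,y_n]$. The plan is to show that the $c$-linear part and the $c^2$ part vanish separately. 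The linear part is the cocycle identity for $\mathrm{div}$: it says the divergence map $X\mapsto\mathrm{div}\,X$ is compatible with the fundamental identity. We would prove it using \eqref{eq5.3} together with the contact property \eqref{Wronskianproperty}, and using $\mathrm{div}(f)=\pm\{f_1,\dots,f_{n-1}\}+(\text{Wronskian terms})$. For the quadratic part, we would check that $\sum_p(-1)^p\mathrm{div}(x,y_p)\mathrm{div}(\hat y_p)$ vanishes by the same antisymmetrisation as in \eqref{eq5.1}.

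The main obstacle is this last step. Since $b$ is arbitrary, both parts must vanish identically, which is suspicious for the $c^2$ term. If the direct determinant computation does not close, we would first test small cases with $n=3$ on monomials $t^r$. If that succeeds, we would generalise by multilinearity to monomials, where everything becomes an explicit polynomial identity in the exponent vectors. If the quadratic part fails to vanish, the proposition has to be restricted to the values $b$ for which it holds, such as $b=0$ or $b=n-1$, i.e.\ the forms $\Omega^0$ and $\Omega^{n-1}$.
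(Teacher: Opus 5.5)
Your skeleton is the paper's. You reduce via Proposition \ref{prop3.3} to \eqref{eq5.6}--\eqref{eq5.7}, substitute \eqref{eq6.1}, and split \eqref{eq5.7} into a part linear in $c=b/(n-1)$ and a part quadratic in $c$; these are the paper's terms (a) and (b). What is missing is the identity \eqref{eq5.2}, which is what makes every one of these verifications collapse. Writing $D_f=\psi(ad_{f_1\wedge\dots\wedge f_{n-1}})$, it reads
\[
\mathrm{div}\,D_f=\sum_{k=1}^{n-1}(-1)^{n+1-k}d_k\big(Wr^k_{f_1,\dots,f_{n-1}}\big)=(-1)^{n}(n-1)\{f_1,\dots,f_{n-1}\}.
\]
It is proved by differentiating $Wr^k_f$ row by row. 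A term in which $d_k$ hits the row of $d_jf$ cancels the term in which $d_j$ hits the row of $d_kf$ in $Wr^j_f$. The $n-1$ terms in which $d_k$ hits the row $(f_1,\dots,f_{n-1})$ all give the same multiple of $\{f_1,\dots,f_{n-1}\}$.

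You instead guess $\mathrm{div}(f)=\pm\{f\}+(\text{Wronskian terms})$. That is false: there are no extra terms, and the factor is $n-1$. For \eqref{eq5.6} you appeal to ``the derivative of a Laplace expansion'', which is not the mechanism. Your alternative description, as a Pl\"ucker relation among minors of an $n\times(2n-2)$ matrix, is only available after \eqref{eq5.2}; without it, $\mathrm{div}\,D_f$ contains second derivatives and is not a minor of the Wronski matrix. With \eqref{eq5.2}, \eqref{eq5.6} for $\dim U=1$ is just a constant multiple of \eqref{eq5.1} with $k$ renamed $q$.

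The same identity removes the step you flag as the main obstacle. The $c^2$-part becomes $(n-1)^2\sum_{p}(-1)^{p+n}\{x_1,\dots,x_{n-2},y_p\}\{y_1,\dots,\widehat{y_p},\dots,y_n\}$. Up to sign, this is the Laplace expansion along the first $n-1$ rows of $\left|\begin{smallmatrix}J(x)&J(y)\\0&J(y)\end{smallmatrix}\right|$. Subtracting the lower block of rows from the upper one leaves $n-1$ rows supported on the $n-2$ columns of $J(x)$. So the determinant vanishes identically, for every $b$.

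Write $\{x,[y]\}=\{x_1,\dots,x_{n-2},[y_1,\dots,y_n]\}$ and use your shorthand $\hat y_p$. After discarding multiples of the vanishing sum above, the $c$-part reads $\{x,[y]\}=\sum_p(-1)^{n+p}D_{\hat y_p}(\{x,y_p\})$. Since $D_{\hat y_p}(g)=[\hat y_p,g]+(-1)^n\{\hat y_p\}g$, this is exactly \eqref{eq5.3} plus that vanishing sum again. So your route through \eqref{eq5.3} for the linear part works, and it is shorter than the paper's direct computation of (a). It only works once \eqref{eq5.2} is available.

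Your fallback of restricting to $b\in\{0,n-1\}$ would be wrong. The adjoint module of $W^n$ is itself $T(U,0)$ with $b=-1$, because $[f_1,\dots,f_{n-1},g]=D_f(g)-\frac{1}{n-1}(\mathrm{div}\,D_f)\,g$.

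Your caveat about $\alpha\in\mathbb{Z}^{n-1}$ and $b\in\{0,n-1\}$ is correct. In those cases $T(U,\alpha)$ is not simple even over $W_{n-1}$: the constants, respectively the non-exact top forms, give a proper submodule or quotient. The paper's statement glosses over this.
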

\begin{proof} We prove the proposition by straightforward verifying of
Equalities \eqref{eq5.6} and \eqref{eq5.7}.

First, we note that by using the action of a derivation on a determinant, one can obtain
\begin{equation}\label{eq5.2}
\sum\limits_{k=1}^{n-1}(-1)^kd_k(Wr^k_{y_1,\dots, \widehat{y}_p, \dots, y_n})=-(n-1)\{y_1,\dots, \widehat{y}_p, \dots, y_n\}.
\end{equation}

Substituting now \eqref{eq5.2} along with \eqref{eq6.1} into \eqref{eq5.6} we derive

%
%

$$\sum\limits_{p=1}^{n}(-1)^{p}\Big(Wr^q_{x_1, \dots, x_{n-2},y_p} \{y_1, \dots, \widehat{y}_p, \dots, y_n\}+\{x_1, \dots, x_{n-2}, y_p\}
Wr^q_{y_1, \dots, \widehat{y}_{p}, \dots, y_n}\Big)
=$$

%
%
%
%

$$
-\left|\begin{array}{cccccccc}
W^q(x)&W^q(y)\\
0&J(y)\end{array}\right|
-\left|
\begin{array}{cccccccc}
J(x)&J(y)\\
0&W^q(y)\\
\end{array}\right|
=
-\left|\begin{array}{cccccccc}
J^{q,0}(x)& W^q(y)\\
0&J(y)\\
\end{array}\right|-
\left|\begin{array}{cccccccc}
W^q(x)&J^{q,0}(y)\\
0&J(y)\\
\end{array}\right|
-$$

$$
\left|\begin{array}{cccccccc}
J^{q=0}(x)&J(y)\\
0&W^q(y)\\
\end{array}\right|-
\left|\begin{array}{cccccccc}
J(x)&J^{q=0}(y)\\
0&W^q(y)\\
\end{array}\right|
=
(-1)^{n-1}\left|\begin{array}{cccccccc}
J^q(x)&J^q(y)\\
0&W(y)\\
\end{array}\right|-
\left|\begin{array}{cccccccc}
W^q(x)&J^{q,0}(y)\\
0&J(y)\\
\end{array}\right|-
$$
$$
-\left|\begin{array}{cccccccc}
J^{q,0}(x)&J(y)\\
0&W^q(y)\\
\end{array}\right|-
\left|\begin{array}{cccccccc}
J(x)&J^{q,0}(y)\\
0&W^q(y)\\
\end{array}\right|
=
-
\left|\begin{array}{cccccccc}
W^q(x)&0\\
0&J(y)\\
\end{array}\right|-
\left|\begin{array}{cccccccc}
J(x)&0\\
0&W^q(y)\\
\end{array}\right|
=0,$$

It implies that Equality \eqref{eq5.6} holds true for $dim U=1.$

Next, expanding the first term of Equality \eqref{eq5.7} along the last column and grouping similar terms we derive

$$\begin{array}{ccc}
\underbrace{\sum\limits_{k=1}^{n-1}\Bigg(\sum\limits_{p=1}^{n}
\sum\limits_{q=1}^{n-1}(-1)^{n+k}\Big((-1)^{p+q}d_q(d_k(Wr^k_{x_1, \dots, x_{n-2}, y_p}))Wr^q_{y_1, \dots, \widehat{y}_{p}, \dots, y_n}}_{(a)}+\\[3mm]
\underbrace{d_k(Wr^k_{x_1, \dots, x_{n-2}, [y_1, \dots, y_{n}]})\Big)
\Bigg)}_{(a)}\otimes \frac{b}{n-1}u_0+
\end{array}$$

$$\underbrace{\sum\limits_{p=1}^{n}(-1)^{p+n}\sum\limits_{k=1}^{n-1}(-1)^{k}
d_k(Wr^k_{x_1, \dots, x_{n-2}, y_p})\cdot \sum\limits_{q=1}^{n-1}(-1)^{q}d_q(Wr^q_{y_1, \dots, \widehat{y}_p, \dots, y_n})}_{(b)}\otimes (\frac{b}{n-1})^2u_0=0.$$

Using \eqref{eq5.2} and then expand $\{x_1, \dots, x_{n-2}, y_p\}$ along the last column we obtain

$$(a)=
(n-1)(-1)^{n-1}\Bigg(\sum\limits_{p=1}^{n}\sum\limits_{q=1}^{n-1}(-1)^{p+q}d_q\Big(\sum\limits_{i=1}^{n-1}(-1)^{n-1+i}d_i(y_p)Jac^i(x_1, \dots, x_{n-2})\Big)Wr^q_{y_1, \dots, \widehat{y}_{p}, \dots, y_n}+$$
$$\{x_1, \dots, x_{n-2}, [y_1, \dots, y_{n}]\}\Bigg)\overset{derivation \ property \ for \ d_q}{=}$$

$$(n-1)(-1)^{n-1}\Bigg(\sum\limits_{i=1}^{n-1}(-1)^{n+i}\sum\limits_{p=1}^{n}\sum\limits_{q=1}^{n-1}(-1)^{p+q+1}d_i(d_q(y_p))Wr^q_{y_1, \dots, \widehat{y}_{p}, \dots, y_n}Jac^i(x_1, \dots, x_{n-2})+$$
$$\sum\limits_{q,i=1}^{n-1}(-1)^{n+i}\sum\limits_{p=1}^{n}(-1)^{q+p+1}d_i(y_p)Wr^q_{y_1, \dots, \widehat{y}_{p}, \dots, y_n}d_q\Big(Jac^i(x_1, \dots, x_{n-2})\Big)+\{x_1, \dots, x_{n-2}, [y_1, \dots, y_{n}]\}\Bigg)=$$

$$(n-1)(-1)^{n-1}\Bigg(\sum\limits_{i=1}^{n-1}(-1)^{n+i}d_i[y_1, \dots,  y_n]Jac^i(x_1, \dots, x_{n-2})+$$
$$\sum\limits_{q,i=1}^{n-1}(-1)^{n+i}\sum\limits_{p=1}^{n}(-1)^{q+p+1}d_i(y_p)Wr^q_{y_1, \dots, \widehat{y}_{p}, \dots, y_n}d_q\Big(Jac^i(x_1, \dots, x_{n-2})\Big)+\{x_1, \dots, x_{n-2}, [y_1, \dots, y_{n}]\}\Bigg)=$$



$$(n-1)\sum\limits_{q,i=1}^{n-1}(-1)^{i} \sum\limits_{p=1}^{n}(-1)^{p+q}d_i(y_p)Wr^q_{y_1, \dots, \widehat{y}_{p}, \dots, y_n}d_q\Big(Jac^i(x_1, \dots, x_{n-2})\Big)=$$

$$(n-1)\sum\limits_{q,i=1}^{n-1}(-1)^{i+1} det(A)d_q\Big(Jac^i(x_1, \dots, x_{n-2})\Big)=
(1-n)[y_1,\dots,y_n]\sum\limits_{q=1}^{n-1}(-1)^{q}d_q\Big(Jac^q(x_1, \dots, x_{n-2})\Big)=0,$$
where $det(A)$ is obtained from $[y_1, \dots, y_n]$ by replacing $d_q$ to $d_i.$
%
%
%
%
%
%

Again applying \eqref{eq5.2} we derive

$$(b)=
(n-1)^2\sum\limits_{p=1}^{n}(-1)^{p+n}\{x_1, \dots, x_{n-2}, y_p\}\{y_1, \dots, \widehat{y}_p, \dots, y_n\}=
(n-1)^2(-1)^{n+1}\left|\begin{array}{ccccccc}
J(x)&J(y)\\
0&J(y)\\
\end{array}
\right|=
0.$$
The proof is completed.
\end{proof}

\

\subsection{The case of $T(U, \alpha)$ with $dim U > 1$.}

\

\

Let us rewrite Equality \eqref{eq5.6} in the following form
\begin{equation}\label{eq6.5}\sum\limits_{i,k=1}^{n-1}(-1)^{k+1}a_{i,k}\otimes E_{i,k}u=0, \quad \mbox{for any} \quad u\in U,
\end{equation}
where
$$a_{i,k}=\sum\limits_{p=1}^{n}(-1)^{p}\Big(Wr^q_{x_1, \dots, x_{n-2},y_p}d_i(Wr^k_{y_1, \dots, \widehat{y}_p, \dots, y_n})+
d_i(Wr^k_{x_1, \dots, x_{n-2}, y_p})
Wr^q_{y_1, \dots, \widehat{y}_{p}, \dots, y_n}\Big).$$

Since $U$ is a simple $gl_{n-1}$-module, it is the highest weight module. Consider the above equality for $u_{\lambda}$ (here $u_{\lambda}$ is the highest weight vector of $U$)
\begin{equation}\label{eq6.6}\sum\limits_{1\leq k \leq i \leq n-1}(-1)^{k+1}a_{i,k}\otimes E_{i,k}u_{\lambda}=0.
\end{equation}

\begin{lem} \label{lem6.2} We have that $a_{i,1}=0$ for some $i\in \{2, \dots, n-1\}$.

\end{lem}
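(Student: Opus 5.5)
The plan is to treat $a_{i,1}$ the way Lemma \eqref{eq5.1} and the proof of Proposition \ref{prop6.1} treat similar sums: rewrite the alternating sum over $p$ as a sum of block determinants and show that these cancel.

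The starting observation concerns $k=1$. Deleting the first row of $Wr_{f_1,\dots,f_{n-1}}$, which is the row of the functions themselves, and then its last column leaves a minor built only from the rows $d_1(f),\dots,d_{n-1}(f)$. So $Wr^1_{y_1,\dots,\widehat{y}_p,\dots,y_n}$ and $Wr^1_{x_1,\dots,x_{n-2},y_p}$ are Jacobian-type minors. I will therefore first record the undifferentiated identity
$$\sum_{p=1}^{n}(-1)^{p}\Big(Wr^q_{x_1,\dots,x_{n-2},y_p}\,Wr^1_{y_1,\dots,\widehat{y}_p,\dots,y_n}+Wr^1_{x_1,\dots,x_{n-2},y_p}\,Wr^q_{y_1,\dots,\widehat{y}_p,\dots,y_n}\Big)=0 .$$
It is obtained exactly as in \eqref{eq5.1}. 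One applies Laplace's theorem to the $(2n-2)\times(2n-2)$ block matrices with blocks $W^q(x),W^1(y)$ and $J^1(x),W^q(y)$. One then row-reduces the off-diagonal block so that only one row survives. The two resulting determinants then differ only by the sign $(-1)^n$ versus $(-1)^{n-1}$.

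Next I will apply $d_i$ to this identity, using the Leibniz rule for $d_i$. This writes $a_{i,1}$ as minus a complementary sum $a'_{i,1}$, in which $d_i$ falls on the other factor in each product:
$$a'_{i,1}=\sum_{p}(-1)^p\Big(d_i(Wr^q_{x,y_p})\,Wr^1_{\widehat{y}_p}+Wr^1_{x,y_p}\,d_i(Wr^q_{\widehat{y}_p})\Big).$$
To evaluate each derivative of a determinant I expand it as a sum of determinants, each with a single row differentiated. Reassembling the $p$-sum by Laplace's theorem, $a_{i,1}$ becomes a finite sum of block determinants of the form
$$\left|\begin{array}{cc} X & Y\\ 0 & Z\end{array}\right|,$$
in which exactly one row, of $X$, $Y$ or $Z$, has been replaced by its $d_i$-image.

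The key step is to choose $i\in\{2,\dots,n-1\}$ so that each such modified block matrix either has two proportional rows or reduces, after the row operations of \eqref{eq5.1}, to a block-diagonal determinant whose off-diagonal block is zero. The latter is exactly the final step in the proof of Proposition \ref{prop6.1}. The terms then cancel in pairs, as in that proof. I expect to take $i=q$ (when $q\ge 2$) or $i=n-1$. The reason is that for $k=1$ the differentiated rows $d_i d_j(f)$ coincide under the symmetry $d_id_j=d_jd_i$, so differentiating in the direction of an already present row produces a repeated row.

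The main obstacle is the sign bookkeeping in this last step. One must check that the terms in which $d_i$ hits the $x$-block are matched exactly by the terms in which it hits the $y$-block, so that the Laplace signs $(-1)^{p}$, $(-1)^{n}$ and $(-1)^{n-1}$ really do cancel. I would first test this on $n=3$, where $i=2$ is forced, by computing $a_{2,1}$ directly on monomials $x_1=t^{r}$, $y_j=t^{s^{(j)}}$. If the expression does not vanish identically there, I would instead aim to prove that $a_{i,1}$ vanishes for the particular monomial choices of $x$ and $y$ needed later in \eqref{eq6.6}.
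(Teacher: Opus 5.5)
\textbf{The approach has a genuine gap: it targets an unconditional determinant identity, but the lemma is a conditional consequence of a module hypothesis.}

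You treat Lemma \ref{lem6.2} as an identity among Wronskian minors valid for all $x,y$. In the paper it is not one. It follows from the standing hypothesis that the Balanced Equality holds on $T(U,\alpha)$ with $\dim U>1$, i.e. that \eqref{eq6.5} holds for the highest weight vector $u_\lambda$; there only the terms with $k\le i$ survive, since $E_{i,k}u_\lambda=0$ for $i<k$. The lemma's role is to be contradicted by the example that follows it, which gives, for each $i\in\{2,\dots,n-1\}$, elements with $a_{i,1}\neq 0$. That contradiction is how Proposition \ref{prop6.4} is proved. If your identity held for a fixed $i$, that example would be false and the proposition would collapse.

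Your plan never uses $U$, $u_\lambda$, $\dim U>1$ or \eqref{eq6.5}, and these ingredients are the whole proof:
\begin{itemize}
\item Suppose $E_{i,1}u_\lambda=0$ for all $i\ge 2$. Then $(E_{1,1}-E_{i,i})u_\lambda=[E_{1,i},E_{i,1}]u_\lambda=0$. So $u_\lambda$ has $sl_{n-1}$-weight $0$, and $\dim U=1$.
\item Hence $E_{i_0,1}u_\lambda\neq 0$ for some $i_0\ge 2$. Its weight $\lambda+\varepsilon_{i_0}-\varepsilon_1$ differs from the weight $\lambda+\varepsilon_i-\varepsilon_k$ of every other vector $E_{i,k}u_\lambda$ with $k\le i$ in the equality.
\item Projecting onto that weight space of $U$ gives $a_{i_0,1}\otimes E_{i_0,1}u_\lambda=0$. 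So $a_{i_0,1}=0$ for all $x,y,q$, with $i_0$ depending only on $U$.
\end{itemize}

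\textbf{The determinant route cannot replace this, because the identity you aim at is false for $n\ge 4$.} Your first two steps are fine. The undifferentiated sum is a correct Laplace-type identity, and differentiating it does give $a_{i,1}=-a'_{i,1}$. The cancellation step is what fails.

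One point about conventions: in the paper's expansion of $[f_1,\dots,f_{n-1},t^s]$, $Wr^k$ is the minor complementary to the $d_k$-row. So $Wr^1$ still contains the row of functions and is not Jacobian-type.

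Two counterexamples for $n=4$:
\begin{itemize}
\item Take $q=2$, $x=(1,t_2)$, $y=(t_1,t_1t_3,t_3^2,t_3)$. Then $Wr^2_{x,y_p}=0$ and $Wr^1_{x,y_p}=t_2d_3(y_p)$, so
\[
a_{3,1}=\sum_{p}(-1)^p d_3\big(t_2d_3(y_p)\big)\,Wr^2_{y_1,\dots,\widehat{y}_p,\dots,y_4}=(1-4+1)\,t_1^2t_2t_3^4\neq 0 .
\]
\item Take $q=3$, $x=(1,t_3)$, $y=(t_1,t_1t_2,t_2^2,t_2)$. This gives $a_{2,1}=2t_1^2t_2^4t_3\neq 0$.
\end{itemize}
So no fixed $i$ works. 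Letting $i$ depend on $q$, as in your choice $i=q$, changes the quantifiers and cannot feed the contradiction. Your fallback, proving vanishing on the monomials of that example, runs opposite to what the paper needs.

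Your proposed sanity check at $n=3$ would also mislead you. There $a_{2,1}$ does vanish identically. With the paper's own data ($x_1=1$, $y=(t_1,t_2,t_2^2)$, $q=2$) the three terms are $-3t_1t_2^3-t_1t_2^3+4t_1t_2^3=0$, so the value $-4t_1t_2^3$ printed in the paper appears to be a slip. This vanishing is a low-dimensional accident, not evidence for general $n$.
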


\begin{proof} Let us assume that $E_{i, 1}u_{\lambda}=0$ for any $i\in\{2, \dots, n-1\}.$ From
$$0=E_{1,i}(E_{i, 1}u_{\lambda})=E_{i,1}(E_{1, i}u_{\lambda})+[E_{1,i},E_{i, 1}]u_{\lambda}=[E_{1,i},E_{i, 1}]u_{\lambda}=(E_{1,1}-E_{i, i})u_{\lambda}$$
we conclude $(E_{1,1}-E_{i, i})u_{\lambda}=0$ for any $2\leq i \leq n-1$ . It implies $h_ku_{\lambda}=0$ for any $k\in\{1, \dots, n-2\}$ and hence, $dimU=1.$ This contradiction leads to existence of $i_0\in \{2, \dots, n-1\}$ such that $E_{i_0,1}u_{\lambda}\neq 0.$ Since no other element of the form
$E_{i,k}u_{\lambda}$ belongs to the same weight space as $E_{i_0,1}u_{\lambda}$ we obtain $a_{i_0,1}=0$.
\end{proof}

Below we present $x_p, y_q\in \mathbb{C}[t_1^{\pm 1}, \dots, t_{n-1}^{\pm 1}]$ with $1\leq p \leq n-2$ and $1\leq q \leq n$ such that $a_{i,1}\neq 0$ for any $i\in\{2, \dots, n-1\}$.

\begin{exam} \label{exam6.3} Let $n\geq 4$.
\begin{itemize}
\item For a fixed $i\in \{2, \dots, n-2\}$ we consider the following elements:
$$x_1=1, \quad x_j=t_j \quad \mbox{for} \quad j<i \quad x_j=t_{j+1} \quad \mbox{for} \quad  i\leq j \leq n-2,$$
$$y_j=t_j \quad \mbox{for} \quad j<i \quad y_i=t_1t_i, \quad y_{i+1}=t_i^2, \quad y_j=t_{j-2}\quad \mbox{for} \quad  i+2\leq j \leq n,$$
Taking into account that
$$\begin{array}{llll}
 Wr^1_{y_1, \dots, \widehat{y}_p, \dots, y_n}=0, \ \ 1\le p\le n\\[1mm]
 Wr^q_{y_1, \dots, \widehat{y}_{p}, \dots, y_n}=0, \ \ q\neq n-1,\\[1mm]
 Wr^1_{x_1, \dots, x_{n-2}, y_p}=0, \ \ p\neq \{i,i+1,i+2\},\end{array}$$
we obtain
$\begin{array}{ccc}
a_{i,1}= 
(-1)^{n+i}2t_i^2(t_1t_2t_3\dots t_{n-2})^2t_{n-1}\neq0 \end{array}$ 
for $2\le i\le n-2$.

\item Let us consider the following elements
$$x_1=1,\ x_j=t_j,\ 2\leq j \leq n-2,$$
$$y_j=t_j,\ 1\leq j \leq n-3, \ y_{n-2}=t_1t_{n-1},\ y_{n-1}=t_{n-1}^2,\ y_{n}=t_{n-1}.$$
Since
$$\begin{array}{llll}
Wr^1_{y_1, \dots, \widehat{y}_p, \dots, y_n}=0, \ \ 1\le p\le n\\[1mm]
Wr^q_{y_1, \dots, \widehat{y}_{p}, \dots, y_n}=0, \ \ q\neq n-2,\\[1mm]
Wr^1_{x_1, \dots, x_{n-2}, y_p}=0, \ \ p\neq \{n-2,n-1,n\},\end{array}$$
we get
$\begin{array}{llll}
a_{n-1,1}=
2(t_1t_2\dots t_{n-3})^2t_{n-1}^4t_{n-2}\neq 0.\end{array}
$
\end{itemize}

Let $n=3, q=2$ and $x_1=1,\ y_1=t_1,\ y_2=t_2,\ y_3=t_2^2$. Then $a_{2,1}=-4t_1t_2^3\neq0.$
\end{exam}

From Lemma \ref{lem6.2} and Example \ref{exam6.3} we obtain the following result.
\begin{prop} \label{prop6.4} A cuspidal module $T(U, \alpha)$ with $dim U > 1$ is not simple module over $W^n$.
\end{prop}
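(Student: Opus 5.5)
The plan is to argue by contradiction, combining Proposition \ref{prop3.3} with Lemma \ref{lem6.2} and Example \ref{exam6.3}. Suppose $T(U,\alpha)$ with $\dim U>1$ were a (simple) module over the $n$-Lie algebra $W^n$. By Proposition \ref{prop3.3} and the isomorphism $\widetilde{\cal W}^n\cong InDer(W^n)\cong W_{n-1}$, the $W_{n-1}$-module structure on $T(U,\alpha)$ then has to satisfy the Balanced Equality \eqref{eq3.7}. By the reduction of Section \ref{Sec6}, which uses the arbitrariness of $s$, this forces Equality \eqref{eq5.6} for every $1\le q\le n-1$, every $u\in U$ and all choices of $x_1,\dots,x_{n-2},y_1,\dots,y_n\in\mathbb{C}[t_1^{\pm1},\dots,t_{n-1}^{\pm1}]$. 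In the notation of \eqref{eq6.5}, this means $\sum_{i,k}(-1)^{k+1}a_{i,k}\otimes E_{i,k}u=0$.

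Next I would specialize to the highest weight vector $u=u_\lambda$ and look at \eqref{eq6.6}. The argument in the proof of Lemma \ref{lem6.2} shows that, because $\dim U>1$, there is an index $i_0\in\{2,\dots,n-1\}$ with $E_{i_0,1}u_\lambda\neq 0$. The vector $E_{i_0,1}u_\lambda$ has weight $\lambda-\epsilon_1+\epsilon_{i_0}$, and no other term $E_{i,k}u_\lambda$ with $k\le i$ lies in that weight space. Since the coefficients $a_{i,k}$ sit in the first tensor factor, projecting the relation onto this weight component yields $a_{i_0,1}\otimes E_{i_0,1}u_\lambda=0$. Hence $a_{i_0,1}=0$ must hold identically in the $x$'s and $y$'s, for the fixed $q$ under consideration.

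Finally, Example \ref{exam6.3} gives explicit Laurent monomials $x_p,y_q$ (with an appropriate $q$) for which $a_{i,1}\neq 0$, separately for each $i\in\{2,\dots,n-1\}$ and also for $n=3$. Applying this with $i=i_0$ contradicts the previous step. So \eqref{eq5.6} fails and the Balanced Equality is violated. Therefore $T(U,\alpha)$ does not carry a $W^n$-module structure compatible with its $W_{n-1}$-action, and in particular it is not a simple $W^n$-module.

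The main obstacle is the weight-separation step. One has to check carefully that $E_{i_0,1}u_\lambda$ is linearly independent from all the other $E_{i,k}u_\lambda$ appearing in \eqref{eq6.6}; the diagonal terms $E_{k,k}u_\lambda$, which are multiples of $u_\lambda$, must be included in this check. One also needs the existence of $x,y$ with $a_{i_0,1}\ne 0$ for the specific $i_0$ coming from $U$, and this is why Example \ref{exam6.3} has to cover every $i$. Verifying the Wronskian vanishings claimed in that example is routine but tedious determinant bookkeeping, and I would treat it as given.
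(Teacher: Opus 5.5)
Your route is the paper's own: Proposition \ref{prop3.3} forces \eqref{eq5.6}, the weight argument of Lemma \ref{lem6.2} forces $a_{i_0,1}=0$, and the explicit elements in the example following Lemma \ref{lem6.2} then give a contradiction. For $n\ge 4$ this works; for instance, with $n=4$, $i=2$, $q=3$ the first family does give $a_{2,1}=2t_1^2t_2^4t_3\neq 0$.

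The gap is $n=3$, which you explicitly cover by quoting that example. Take $x_1=1$, $(y_1,y_2,y_3)=(t_1,t_2,t_2^2)$, $q=2$. Then $Wr^2_{x_1,y_p}=d_1(y_p)$, $Wr^1_{x_1,y_p}=d_2(y_p)$, $Wr^1_{\widehat{y}_1}=t_2^3$, $Wr^2_{\widehat{y}_2}=-t_1t_2^2$ and $Wr^2_{\widehat{y}_3}=-t_1t_2$. So the summands $p=1,2,3$ of $a_{2,1}$ are $-3t_1t_2^3$, $-t_1t_2^3$ and $4t_1t_2^3$. Hence $a_{2,1}=0$, not $-4t_1t_2^3$.

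No other choice of elements rescues this. For $n=3$ every $a_{i,k}$ vanishes identically, so \eqref{eq5.6} holds for all $U$ and the highest-weight argument has nothing to work with.

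To see this, first look at the part of \eqref{eq5.5} that is quadratic in $s$. It gives $\sum_p(-1)^p\big(Wr^q_{x,y_p}Wr^k_{\widehat{y}_p}+Wr^k_{x,y_p}Wr^q_{\widehat{y}_p}\big)=0$. Applying $d_i$ shows that $a_{i,k}$ formed with index $q$ equals minus $a_{i,q}$ formed with index $k$. So for $n=3$ only $(q,k)=(2,1)$ matters. On monomials $x_1=t^{\xi}$, $y_p=t^{\eta_p}$, this coefficient reduces to instances of the planar identity $[v,w]u+[w,u]v+[u,v]w=0$, where $u,v,w\in\mathbb{C}^2$ and $[\cdot,\cdot]$ is the $2\times 2$ determinant. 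Hence it is zero, and multilinearity finishes the argument.

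For $n=3$ one would therefore have to use \eqref{eq5.7} instead. With the same elements, the $s$-independent part of the Balanced Equality comes out as $2(1-b)\,t_1t_2^3\otimes E_{2,1}u=0$, where $b$ is the scalar by which the identity matrix acts on $U$. This excludes $b\neq 1$ but says nothing when $b=1$.

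So your proof, like the paper's, establishes the proposition only for $n\ge 4$. The case $n=3$ needs a genuinely different argument, and it is not even clear that the statement holds there.
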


\

\subsection{The case of the second cuspidal $W_{n-1}$-module.}

\

\

Since
\begin{equation}\label{eqder}
W_{n-1}*(d(\Omega^s(\alpha)))=d(W_{n-1}*\Omega^s(\alpha)),
\end{equation}
we only need to check whether left sides of Equalities \eqref{eq5.6} and \eqref{eq5.7} lie in $Ker(d)$.

Applying similar arguments as those used in Subsection \ref{subsection 7.1}, we deduce that the Balanced Equality holds true for $\Omega^0(\alpha)$. Therefore, using \eqref{eqder}, we obtain that simple $W_{n-1}$-module $d(\Omega^0(\alpha))$ is also simple $W^n$-module.

For the element $u=e_1\wedge \dots \wedge e_s\in \wedge^s V$ with $1\leq s \leq n-2$ we consider the left side of \eqref{eq5.6}:

$$\sum\limits_{i,k=1}^{n-1}(-1)^{k+1}a_{i,k}\otimes E_{i,k}u=
\sum\limits_{k=1}^{s}\Big((-1)^{k-1}a_{k,k}\otimes e_1\wedge \dots \wedge e_s+
\sum\limits_{i=s+1}^{n-1}a_{i,k}\otimes e_i\wedge e_1\wedge \dots \wedge \widehat{e_k} \wedge \dots \wedge e_s\Big).$$

Now, applying the de Rham differential $d$ and using the properties of the wedge product to the expression obtained above, we obtain
$$\sum\limits_{k=1}^{s}\sum\limits_{j=1}^{n-1}\Bigg((-1)^{k-1}d_j(a_{k,k})\otimes e_j\wedge e_1\wedge \dots \wedge e_s+
\sum\limits_{i=s+1}^{n-1}d_j(a_{i,k})\otimes e_j\wedge e_i\wedge e_1\wedge \dots \wedge \widehat{e_k} \wedge \dots \wedge e_s\Bigg)=$$

%


$$\sum\limits_{k=1}^{s}\Bigg(\sum\limits_{i=s+1}^{n-1}(-1)^{k-1}
\Big(d_i(a_{k,k})-d_k(a_{i,k})\Big)\otimes e_i\wedge e_1\wedge \dots \wedge e_s+$$
$$\sum\limits_{s+1\leq i < j \leq n-1}\Big(d_j(a_{i,k})-d_i(a_{j,k})\Big)\otimes e_j\wedge e_i\wedge e_1\wedge \dots \wedge \widehat{e_k} \wedge \dots \wedge e_s\Bigg).$$



Comparing elements, we find that left side of \eqref{eq5.6} involving the element $u,$ lies in $Ker(d)$ if the following conditions hold:
\begin{equation}\label{k}
\left\{\begin{array}{lllll}
\sum\limits_{k=1}^{s}(-1)^{k-1}\Big(d_i(a_{k,k})-d_k(a_{i,k})\Big)=0, & s+1 \leq i \leq n-1, &1\leq s\leq n-2, \\[3mm]
\sum\limits_{k=1}^{s}\Big(d_j(a_{i,k})-d_i(a_{j,k})\Big)=0,& s+1\leq i < j \leq n-1.\\[3mm]
\end{array}\right.
\end{equation}

\begin{exam} \label{exam6.5}
For a fixed $s\in \{1, \dots, n-2\}$ we consider the following elements of $\cal A_{n-1}$:
$$\begin{array}{llllllll}
x_1=1, & x_j=t_j, & 2\le j\le s, & x_j=t_{j+1}, & s+1\le j\le n-2,&&\\[3mm]
y_j=t_j, & 1\le j\le s, & y_{s+1}=t_1t_{s+1}, & y_{s+2}=t_{s+1}^a, \quad y_{j}=t_{j-2}, & s+3\le j\le n.\\[3mm]
\end{array}$$

Note that
$$ Wr^q_{y_1, \dots, \widehat{y}_{p}, \dots, y_n}=0,\quad q\neq n-1 \quad \mbox{and} \quad W^k_{x_1\dots x_{n-2},y_p}=0,\quad 2\leq k\leq s.$$

After a straightforward computation for $i=s+1$, we obtain

$$\begin{array}{ccccc}
\sum\limits_{k=1}^{s}(-1)^{k-1}\Big(d_{s+1}(a_{k,k})-d_k(a_{s+1,k})\Big)=
(-1)^{s+n}\Big(3a^2-a-2\Big)(t_1t_2t_3\dots t_{n-2})^2t_{n-1}t_{s+1}^{a-1}.\\[5mm]
\end{array}$$

Therefore, the second equality in \eqref{k} is not true for the chosen elements $x_i, y_i$ with $a>1$.
\end{exam}

From Example \ref{exam6.5} we obtain the following result.
\begin{prop} \label{prop6.6}  $d(\Omega^s(\alpha))$ is simple module over $W^n$ only for $s=0$.
\end{prop}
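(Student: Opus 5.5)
The statement has two parts: for $s=0$ the module $d(\Omega^0(\alpha))$ is a simple $W^n$-module, and for $1\le s\le n-2$ it is not. By Proposition \ref{prop3.3}, and the remark that irreducibility over $\cal L$ coincides with irreducibility over $\widetilde{\cal L}\cong InDer(W^n)\cong W_{n-1}$, a simple cuspidal $W_{n-1}$-module is a simple $W^n$-module exactly when it satisfies the Balanced Equality \eqref{eq3.7}. By Section \ref{Sec6}, this is equivalent to Equalities \eqref{eq5.6}--\eqref{eq5.7}. Since the de Rham differential intertwines the actions (relation \eqref{eqder}), an element $d(m)$ with $m\in\Omega^s(\alpha)$ satisfies the Balanced Equality iff the difference of the two sides of \eqref{eq5.6}--\eqref{eq5.7}, computed on $m$ in $\Omega^s(\alpha)$, lies in $Ker(d)$. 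So the plan is to test these differences under $d$.

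For $s=0$, $\Omega^0(\alpha)=T(\mathbb{C},\alpha)$ with the trivial $gl_{n-1}$-module ($b=0$), which is the one-dimensional case. Proposition \ref{prop6.1} already shows that \eqref{eq5.6} and \eqref{eq5.7} hold identically there. Hence the differences vanish before applying $d$, and so they vanish after applying $d$. Since $d(\Omega^0(\alpha))$ is a simple $W_{n-1}$-module by Theorem \ref{mainthm}, it is a simple $W^n$-module.

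For $1\le s\le n-2$, it suffices to produce one $u$ and one choice of $x_i,y_j$ for which the $d$-image of the left side of \eqref{eq5.6} is nonzero. I would take $u=e_1\wedge\cdots\wedge e_s$. As in the computation preceding \eqref{k}, only the $E_{k,k}$ terms with $k\le s$ and the $E_{i,k}$ terms with $k\le s<i$ survive. Applying $d$ and sorting by basis wedges then reduces membership in $Ker(d)$ to the system \eqref{k}. To violate it, I would use the elements of Example \ref{exam6.5} with $i=s+1$. That example gives the value $(-1)^{s+n}(3a^2-a-2)(t_1\cdots t_{n-2})^2t_{n-1}t_{s+1}^{a-1}$ for the first expression in \eqref{k}, and $3a^2-a-2=(3a+2)(a-1)\neq0$ for integers $a>1$, e.g. $a=2$. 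Because distinct basis wedges are linearly independent, the image under $d$ is nonzero. Therefore \eqref{eq5.6} fails on $d(\Omega^s(\alpha))$, and by Proposition \ref{prop3.3} this module is not a $W^n$-module.

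The main obstacle is the explicit evaluation of $a_{k,k}$ and $a_{s+1,k}$ for the chosen polynomials. The vanishing pattern noted in Example \ref{exam6.5} ($Wr^q_{y_1,\dots,\widehat{y}_p,\dots,y_n}=0$ for $q\ne n-1$, and the $Wr^k_{x,y_p}$ vanishing for $2\le k\le s$) collapses the sums to a few monomial terms, so the hardest bookkeeping is tracking signs.
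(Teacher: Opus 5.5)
Your outline reproduces the paper's argument step by step: $s=0$ via Proposition~\ref{prop6.1} and \eqref{eqder}, and $s\ge 1$ via the system \eqref{k} and Example~\ref{exam6.5}. The $s=0$ half is sound. The $s\ge1$ half rests on a reduction that is invalid, and the paper makes the same one.

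The left sides of \eqref{eq5.6} and \eqref{eq5.7} are not the Balanced difference evaluated at $m$. They are coefficients extracted from it. To see this, write the exponent as $\mu\in\alpha+\mathbb{Z}^{n-1}$ (avoiding the clash with the form degree $s$), and let $\mathrm{BD}(m)$ be the left side minus the right side of \eqref{eq3.7}. Section~\ref{Sec6} gives $\mathrm{BD}(t^{\mu}\otimes u)=t^{\mu}\big(\sum_q\mu_qL_q+L_0\big)$. Here $L_q$ is, up to sign, the left side of \eqref{eq5.6} for that $q$, and $L_0$ is the left side of \eqref{eq5.7}. Since $d(t^{\mu}\omega)=t^{\mu}(\mu\wedge\omega+d\omega)$ with $\mu\wedge\omega:=\sum_j\mu_je_j\wedge\omega$, you get
\[
d\big(\mathrm{BD}(t^{\mu}\otimes u)\big)=t^{\mu}\Big(\mu\wedge\sum_{q}\mu_qL_q+\sum_{q}\mu_q\big(dL_q+e_q\wedge L_0\big)+dL_0\Big).
\]
The exponents $\mu$ range over a Zariski-dense set, so $\mathrm{BD}(m)\in\mathrm{Ker}(d)$ for all $m$ amounts to three conditions: $\mu\wedge\sum_q\mu_qL_q=0$ identically in $\mu$; $dL_q=-e_q\wedge L_0$ for every $q$; and $dL_0=0$. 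The system \eqref{k} encodes $dL_q=0$, which is not one of these and is not necessary: a nonzero $dL_q$ can be cancelled by $e_q\wedge L_0$. So the nonzero value from Example~\ref{exam6.5} does not show that the Balanced Equality fails on $d(\Omega^s(\alpha))$.

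This defect is not merely formal, because the statement is false for $s=n-2$, which is $\geq1$ once $n\geq3$. Indeed $d(\Omega^{n-2}(\alpha))\subseteq\Omega^{n-1}(\alpha)=T(\wedge^{n-1}V,\alpha)$. The module $\wedge^{n-1}V$ is one-dimensional with $E_{i,k}$ acting as $\delta_{i,k}$, so this is the case $\dim U=1$, $b=n-1$ of Proposition~\ref{prop6.1}, whose verification is valid for every $b$. Hence the Balanced Equality holds on all of $\Omega^{n-1}(\alpha)$, and therefore on the submodule $d(\Omega^{n-2}(\alpha))$. That submodule is a simple $W_{n-1}$-module by Theorem~\ref{mainthm}, so it is a simple $W^n$-module. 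For $\alpha\notin\mathbb{Z}^{n-1}$ it even coincides with $\Omega^{n-1}(\alpha)$, the case $b=n-1$ of the first family in Theorem~\ref{thm6.7}. In particular, whatever Example~\ref{exam6.5} yields at $s=n-2$ cannot witness a failure.

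A correct treatment must keep $s=n-2$ in the list. For $1\le s\le n-3$ it must test the three genuine conditions above rather than \eqref{k}, for example by exhibiting data with $\mu\wedge\sum_q\mu_qL_q\neq0$.
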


From Propositions \ref{prop6.1}, \ref{prop6.4} and \ref{prop6.6} we obtain the main result of this work related to irreducible cuspidal $W^n$-modules.

\begin{thm} \label{thm6.7} Simple cuspidal modules over $W^n$ are the followings:
\begin{itemize}
  \item $T(U, \alpha)$ with one-dimensional trivial $gl_{n-1}$-module $U=<u>$ under the action
$$[x_1, \dots, x_{n-1}, t^s\otimes u]=
\sum\limits_{i=1}^{n-1}(-1)^{n+1-i}
\Big(Wr^i_{x_1, \dots, x_{n-1}}d_i(t^s)+\frac{b}{n-1}t^sd_i(Wr^i_{x_1, \dots, x_{n-1}})\Big)\otimes u;$$

  \item $d(\Omega^0(\alpha))$
  under the action
  $$[x_1, \dots, x_{n-1}, \sum\limits_{i=1}^{n-1}d_i(t^s)\otimes e_i]=
\sum\limits_{i=1}^{n-1}d_i\Bigg(\sum_{k=1}^{n-1}(-1)^{n+1-k}
Wr^k_{x_1, \dots, x_{n-1}}d_k(t^s)\Bigg)\otimes e_i,$$

where $\{e_1, \dots, e_{n-1}\}$ is a basis of the natural $gl_{n-1}$-module;
  \item $1$-dimensional trivial.
\end{itemize}
\end{thm}

Note that $W^n$-module $T(U, \alpha)$ in the list of Theorem \ref{thm6.7} depends on complex parameters $b, \ \alpha_i, 1\leq i \leq n-1,$ while $d(\Omega^0(\alpha))$ depends only on parameters $\alpha_i, 1\leq i \leq n-1$. Moreover, if $\alpha_i-\alpha'_i\in \mathbb{Z}^{n-1}$ for any $1\leq i \leq n-1$, then modules $T(U, \alpha)$ and $T(U, \alpha')$ are isomorphic. The same we have for modules $d(\Omega^0(\alpha))$. Note that the regular module over $W^n$ corresponds to the module
$T(\mathbb{C}, 0).$

\

\section{Irreducible cuspidal modules over $n$-Lie algebra $S^{n}$.}

\

In this section we are going to identify $S_{n}$-modules in the listed of Theorem \ref{Talboom1} that satisfy Equalities \eqref{eq7.88} - \eqref{eq7.99}. It is obvious that Equalities \eqref{eq7.88} and \eqref{eq7.99} are true for $T(U, \alpha)$ with $dimU=1$.

\

\subsection{The case of $T(U, \alpha)$ with $dimU>1$.}

\

\

Let us write Equality \eqref{eq7.88} in the following form
\begin{equation}\label{eq7.89}\sum\limits_{i,k=1}^{n}(-1)^{k}a_{i,k}\otimes E_{i,k}u=0,
\end{equation}
where
$$a_{i,k}=\sum\limits_{p=1}^{n}(-1)^{p}\Big(Jac^q_{x_1, \dots, x_{n-2}, y_p}d_i(Jac^k_{y_1, \dots, \widehat{y}_p, \dots, y_n})+
d_i(Jac^k_{x_1, \dots, x_{n-2}, y_p})
Jac^q_{y_1, \dots, \widehat{y}_p, \dots, y_n}\Big).$$

Since $U$ is a simple $sl_{n}$-module, it is the highest weight module. For the highest weight vector $u_{\lambda}$ Equality \eqref{eq7.89} has the form
\begin{equation}\label{eq6.6}\sum\limits_{1\leq k \leq i \leq n}(-1)^{k+1}a_{i,k}\otimes E_{i,k}u_{\lambda}=0.
\end{equation}

From Lemma \ref{lem6.2} we get $a_{i,1}=0$ for some $i\in \{2, \dots, n\}$. However, in next example we present elements $x_p, y_q\in \mathbb{C}[t_1^{\pm 1}, \dots, t_n^{\pm 1}]$ with $1\leq p \leq n-2$ and $1\leq q \leq n$ such that $a_{i,1}\neq 0$ for any $i\in\{2, \dots, n\}$. This proves that $S_n$-module $T(U, \alpha)$ with $dimU>1$ is not $S^n$-module for $n\geq 3$.

\begin{exam} \label{exam6.3}
\

\begin{itemize}
\item For a fixed $i\in \{2, \dots, n-1\}$ we consider the following elements:
$$x_j=t_{j+1} \quad \mbox{for} \quad j\le i-2 \quad x_j=t_{j+2} \quad \mbox{for} \quad  i-1\leq j \leq n-2,$$
$$y_j=t_j \quad \mbox{for} \quad j\le i-1 \quad y_{i}=t_i^2,\quad y_{i+1}=t_1t_i,   \quad y_j=t_{j-1}\quad \mbox{for} \quad  i+2\leq j \leq n,$$
Taking into account that
$$\begin{array}{llll}
 Jac^1_{y_1, \dots, \widehat{y}_p, \dots, y_n}=0, \  \ 1\le p\le n\\[1mm]
 Jac^q_{y_1, \dots, \widehat{y}_{p}, \dots, y_n}=0, \  \ q\neq n,\\[1mm]
 Jac^1_{x_1, \dots, x_{n-2}, y_p}=0, \  \ p\neq \{i,i+1\},\end{array}$$
we obtain
$a_{i,1}=
2(-1)^{n}(t_1t_2\dots t_{n-1})^2t_it_n \neq 0.$

\item Consider the elements
$$x_j=t_{j+1},\quad  1\leq j \leq n-2, \quad y_j=t_j, \quad  1\leq j \leq n-2, \quad y_{n-1}=t_n^2, \quad y_{n}=t_1t_{n}.$$
Since
$$\begin{array}{llll}
Jac^1_{y_1, \dots, \widehat{y}_p, \dots, y_n}=0, \ \ 1\le p\le n\\[1mm]
Jac^q_{y_1, \dots, \widehat{y}_{p}, \dots, y_n}=0, \  \ q\neq n-1,\\[1mm]
Jac^1_{x_1, \dots, x_{n-2}, y_p}=0, \ \ p\neq \{n-1,n\},\end{array}$$
we get
$a_{n,1}=
2(-1)^{n-1}(t_1t_2\dots t_{n-2})^2t_{n-1}t_n^3\neq0.$
%
%

\end{itemize}

\end{exam}

\

\subsection{Cuspidal $S_n$-modules of the type $d(\Omega^s(\alpha))$.}

\

\

Since $S_n$ is a subalgebra of $W_n$, and the modules $d(\Omega^s(\alpha))$ in both are nearly identical (differing only in their interaction with the natural modules of $gl_n$ and $sl_n$, verifying Equality \eqref{eq7.89} leads to the restrictions \eqref{k}, with replacing $(n-1)$ to $n$. Namely, we get the following
restrictions:
\begin{equation}\label{k2}
\left\{\begin{array}{lllll}
\sum\limits_{k=1}^{s}(-1)^{k-1}\Big(d_i(a_{k,k})-d_k(a_{i,k})\Big)=0, & for \ any &s+1 \leq i \leq n, &1\leq s\leq n-1, \\[5mm]
\sum\limits_{k=1}^{s}\Big(d_j(a_{i,k})-d_i(a_{j,k})\Big)=0& for \ any &s+1\leq i < j \leq n.\\[5mm]
\end{array}\right.
\end{equation}

\begin{exam} \label{exam7.5}
\

\begin{itemize}

\item For a fixed $s\in \{1, \dots, n-2\}$ and $a\in \mathbb{N}$ we consider elements of $\cal A_{n}$:
$$\begin{array}{lllll}
x_j=t_{j+1}, & 1\le j\le s-1, & x_j=t_{j+2}, \ s\le j\le n-2,\\[3mm]
y_j=t_j, & 1\le j\le s, & y_{s+1}=t_{s+1}^a,\ y_{s+2}=t_1t_{s+1},& y_{j}=t_{j-1}, & s+3\le j\le n.
\end{array}$$

Note that $Jac^l_{y_1, \dots, \widehat{y}_{p}, \dots, y_n}=0$ for $l\neq n$ and $Jac^l_{x_1\dots x_{n-2},y_p}=0$ for $2\leq l\leq s.$

Suppose $i=s+1,$ then we get
$$
\begin{array}{lllll}
\sum\limits_{k=1}^{s}(-1)^{k-1}\Big(d_{s+1}(a_{k,k})-d_k(a_{s+1,k})\Big)
=
(-1)^{n}a(a-3)(t_1t_2\dots t_{n-1})^2t_{s+1}^{a-1}t_n.&&\\[3mm]
\end{array}$$

Therefore, the first equality in \eqref{k2} for $1\leq s \leq n-2$ is not true for the chosen elements $x_i, y_i$ and $a\notin\{0,3\}$.

\item Let $s=n-1$ and $a\in \mathbb{N}$ such that $3a^2-a\neq 0$. Then $i=n$.

Consider the elements of $\cal A_{n}$:
$$x_j=t_{j+1}, \quad y_j=t_j, \quad 1\le j\le n-2, \quad y_{n-1}=t_{n}^a, \quad  y_{n}=t_1t_{n}.$$

It is not difficult to see that $Jac^q_{y_1, \dots, \widehat{y}_{p}, \dots, y_n}=0$ for $q\neq n-1$ and

$$Jac_{x_1,\dots,x_{n-2},y_p}^{l}=
\left\{\begin{array}{clll}
0,&\text{if}& l\notin\{1,n\},\\[1mm]
t_2t_3\dots t_{n-1}d_n(y_p),&\text{if}& l=1,\\[1mm]
(-1)^nt_2t_3\dots t_{n-1}d_1(y_p), & \text{if}& l=n.\\[1mm]
\end{array}\right.$$

%

%
%

Suppose $q=n-1$. Then the first equality of \eqref{k2} comes to
 $$   
 \sum\limits_{k=1}^{n-1}(-1)^{k-1}\Big(d_n(a_{k,k})-d_k(a_{n,k})\Big)=
(-1)^{n}(3\alpha^2-\alpha)(t_1t_2t_3\dots t_{n-2})^2t_{n-1}t_n^{\alpha+1}\neq0.$$

\end{itemize}
\end{exam}

From Example \ref{exam7.5} we derive that $d(\Omega^s(\alpha))$ is irreducible module over $S^n$ only for $s=0$.

Summarizing, with the results obtained above, we can state the next result.
\begin{thm} \label{thm8.3} The following $S_n$-modules:
\begin{itemize}
  \item $T(U, \alpha)$ with one-dimensional trivial $sl_{n}$-module $U=<u>$ under the action
$$[x_1, \dots, x_{n-1}, t^s\otimes u]=
\sum\limits_{i=1}^{n}(-1)^{n+i}
Jac^i_{x_1, \dots, x_{n-1}}d_i(t^s)\otimes u;$$

  \item $d(\Omega^0(\alpha))$
  under the action
  $$[x_1, \dots, x_{n-1}, \sum\limits_{i=1}^{n-1}d_i(t^s)\otimes e_i]=
\sum\limits_{i=1}^{n}d_i\Bigg(\sum_{k=1}^{n}(-1)^{n+}
Jac^k_{x_1, \dots, x_{n-1}}d_k(t^s)\Bigg)\otimes e_i,$$

where $\{e_1, \dots, e_{n-1}\}$ is a basis of the natural $sl_{n-1}$-module;

\item $1$-dimensional trivial,
\end{itemize}
are irreducible cuspidal $S^n$-modules.
\end{thm}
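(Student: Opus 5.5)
The claim is that each of the three listed $S_n$-modules is an irreducible cuspidal $S^n$-module. By Proposition \ref{prop3.3} and the remark after it, it is enough to show two things for each module: it is irreducible and cuspidal as a module over $\widetilde{S^n}\cong InDer(S^n)\cong S_n$, and it satisfies the Balanced Equality \eqref{eq3.7}. The first holds by Theorem \ref{Talboom1}. For the second, Section \ref{Sec6} reduces the check to Equalities \eqref{eq7.88} and \eqref{eq7.99}, where the module structure is transported through the explicit isomorphism $ad_{f_1\wedge\cdots\wedge f_{n-1}}\mapsto\sum_k(-1)^{n+k}Jac^k_{f_1,\dots,f_{n-1}}d_k$. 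The plan is therefore to verify \eqref{eq7.88}--\eqref{eq7.99} case by case.

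\emph{The trivial one-dimensional module.} Every $ad$ acts by zero, so both sides of \eqref{eq3.7} vanish and there is nothing to check.

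\emph{The module $T(U,\alpha)$ with $\dim U=1$.} Here $U$ is the trivial $sl_n$-module, so $E_{i,k}u=0$ for all $i,k$ and every term containing $E_{i,k}u$ or $E_{q,j}E_{i,k}u$ drops out. Hence \eqref{eq7.88} and \eqref{eq7.99} hold identically.

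As a sanity check independent of this reduction, I would confirm the displayed action directly: $t^s\otimes u\mapsto\{x_1,\dots,x_{n-1},t^s\}\otimes u$. With this action, \eqref{eq3.7} is exactly the identity obtained from the fundamental identity \eqref{identity1} of $S^n$ by taking the last argument to be $t^s$. The identity survives the formal extension $d_i(t^s)=s_it^s$ for $s\in\alpha+\mathbb{Z}^n$, because $\{-,\dots,-\}$ is built only from the derivations $d_i$ and \eqref{identity1} is a polynomial identity in the derivatives.

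\emph{The module $d(\Omega^0(\alpha))$.} I would use the analogue of \eqref{eqder}: $S_n*(d(\Omega^0(\alpha)))=d(S_n*\Omega^0(\alpha))$, since $d$ is a homomorphism of $W_n$-modules and hence of $S_n$-modules. The module $\Omega^0(\alpha)$ is $T(\mathbb{C},\alpha)$, for which the Balanced Equality was just verified. Applying $d$ to both sides then gives the Balanced Equality on $d(\Omega^0(\alpha))$. Concretely, the displayed action is $d$ applied to the $\Omega^0$ action, and the identity \eqref{eq3.7} on $\Omega^0(\alpha)$ maps to the same identity on the image.

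\emph{Main obstacle.} The only real subtlety is that $\Omega^0(\alpha)\to d(\Omega^0(\alpha))$ need not be injective. When $\alpha\in\mathbb{Z}^n$, its kernel $\mathbb{C}t^{-\alpha}$ is a trivial submodule. So one must check that the actions on $d(\Omega^0(\alpha))$ are well defined, i.e.\ that they do not depend on the choice of preimage. This follows because $d$ intertwines the actions: if $d(m)=d(m')$, then $ad_x(m)-ad_x(m')=ad_x(m-m')\in\ker d$, since $\ker d$ is a submodule. So the $S^n$-bracket descends to the image, and the proof reduces to the $\dim U=1$ case above.
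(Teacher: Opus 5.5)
\textbf{Gap: irreducibility of the first bullet.} Your verification of the Balanced Equality follows the paper's route. The $E_{i,k}$-terms in \eqref{eq7.88}--\eqref{eq7.99} vanish for trivial $U$, the differential $d$ intertwines the actions on $d(\Omega^0(\alpha))$, and the trivial module is immediate. Your direct check via the fundamental identity for the Jacobian built from the commuting derivations $d_i(t^s)=s_it^s$ is a cleaner justification than the paper's reduction.

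The problem is irreducibility of the first bullet. You attribute it to Theorem~\ref{Talboom1}, but part (a) there excludes the exterior powers $\wedge^kV$, $0\le k\le n-1$. Now $\wedge^0V$ is exactly the trivial $sl_n$-module; in Theorem~\ref{Talboom}(a) it is the excluded weight $\lambda=0=\omega_0$. So nothing you cite covers $T(\mathbb{C},\alpha)$.

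In fact the claim is false for $\alpha\in\mathbb{Z}^n$, as your own last paragraph shows. Then $t^{\alpha}\mathcal{A}_n=\mathcal{A}_n$ contains $t^0=1$ (your $t^{-\alpha}$ in the $F^{\alpha}$-normalization). Since $d_i(1)=0$, the displayed action gives $[x_1,\dots,x_{n-1},1\otimes u]=0$ for all $x_j$. Hence $\mathbb{C}(1\otimes u)$ is a proper trivial submodule. For $\alpha=0$ this is the regular module, where the constants form a central ideal of $S^n$. Your statement that $\mathbb{C}t^{-\alpha}$ is a trivial submodule of $\Omega^0(\alpha)=T(\mathbb{C},\alpha)$ contradicts the irreducibility you claimed for the first bullet.

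\textbf{What the argument does prove.} For $\alpha\notin\mathbb{Z}^n$, no $s\in\alpha+\mathbb{Z}^n$ is zero, so
$d\colon T(\mathbb{C},\alpha)=\Omega^0(\alpha)\to d(\Omega^0(\alpha))=t^{\alpha}W_1^{\alpha}$
is injective, hence an isomorphism of $S_n$-modules. Irreducibility then follows from Theorem~\ref{Talboom}(b). For $\alpha\in\mathbb{Z}^n$, only the quotient $T(\mathbb{C},\alpha)/\mathbb{C}(1\otimes u)\cong d(\Omega^0(\alpha))$ is irreducible.

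So the first bullet needs the hypothesis $\alpha\notin\mathbb{Z}^n$, and under that hypothesis it coincides with the second bullet. The paper has the same defect. It checks only \eqref{eq7.88}--\eqref{eq7.99} for $\dim U=1$, never addresses irreducibility of $T(U,\alpha)$ for trivial $U$, and even identifies $T(\mathbb{C},0)$ with the regular module. Following the paper will therefore not close this gap; the statement itself must be restricted.
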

If $\alpha_i-\alpha'_i\in \mathbb{Z}^{n}$ for any $1\leq i \leq n$, then modules $T(U, \alpha)$ and $T(U, \alpha')$ are isomorphic. The same true for modules $d(\Omega^0(\alpha))$.The same we have for modules $d(\Omega^0(\alpha))$. Note that the regular module over $S^n$ corresponds to the module $T(\mathbb{C}, 0)$.

\

{\bf Acknowledgement} {{\small{\ The research problem studied in this paper was suggested to the first author by Efim Zelmanov during his stay at the University of California, San Diego. He is grateful to him for valuable comments and discussions throughout the completion of the results. B. Omirov thanks the SUSTech International Center for Mathematics, Shenzhen for the hospitality, where the main part of this work was carried out. He also acknowledges Vyacheslav Futorny for discussions on the representations of Lie algebras $W_n$ and $S_n$.}}
\\[3mm]
{\bf Data availability} {{\small{\ Data sharing not applicable to this article as no datasets were generated or analysed during
the current study.}}
\\[3mm]
{\bf Conflict of interest} {{\small{\ The authors have no competing interests to declare that are relevant to the content of this
article.}}
%

\end{document}